\documentclass[11pt]{article}

\usepackage[
  textheight=190mm,
  left=17.5mm,
  right=17.5mm
]{geometry}

\usepackage{lipsum}
\usepackage{amsfonts,amsmath,amsthm}
\usepackage{graphicx}
\usepackage{epstopdf}
\usepackage{algorithmic}
\ifpdf
\DeclareGraphicsExtensions{.eps,.pdf,.png,.jpg}
\else
\DeclareGraphicsExtensions{.eps}
\fi

\usepackage{hyperref}

\newcommand{\e} {\varepsilon}
\newcommand{\p}{{\mathbf P}}
\newcommand{\R}{{\mathbb R}}
\newcommand{\C}{{\mathbb C}}

\newcommand{\A}{{\mathcal A}}

\newcommand{\N}{{\mathbb N}}

\newcommand{\eps} {\varepsilon}

\newcommand{\di}[1]{\operatorname{d}\!#1}
\newcommand{\im}{\operatorname{im}}

\newtheorem{theorem}{Theorem}[section]
\newtheorem{assumption}[theorem]{Assumption}
\newtheorem{proposition}[theorem]{Proposition}
\newtheorem{definition}[theorem]{Definition}
\newtheorem{remark}[theorem]{Remark}
\newtheorem{lemma}[theorem]{Lemma}
\newtheorem{corollary}[theorem]{Corollary}

\title{Ensemble Feedback Methods for Families of Linear Systems}

\author{Michael Sch\"onlein\thanks{Bauhaus-Universit\"at Weimar, Germany 
		(email: michael.schoenlein$\{AT\}$uni-weimar.de)}
	\and Fabian Wirth\thanks{University of Passau, Germany 
		(email: fabian.lastname$\{AT\}$uni-passau.de)}
	}

\usepackage{amsopn}

\begin{document}

\maketitle

\begin{abstract}
We consider feedback stabilization for one-parameter families of finite-dimensional linear systems over compact parameter sets in the complex field. Classical ensemble feedback induces compact control operators and therefore cannot modify the essential spectrum of the associated multiplication operators describing the free motion of the system. This precludes stabilization in many infinite-dimensional settings. To address this issue, multiplication feedback operators are introduced. For systems with constant Hermite indices, an analogue of Heymann's lemma is proved,  as well as a pole placement theorem, and stabilization results. The relation of pointwise and ensemble controllability is investigated. For systems with nonconstant Hermite indices, corresponding results are obtained under additional assumptions on the structure of the parameter set.
\end{abstract}

\textbf{Keywords:}
Families of systems, ensemble  controllability, stabilization, feedback, matrix multiplication operator, spectrum\\

\textbf{AMS Subject Classification (2020):}
47A10, 93B52, 93B55, 93C05

\section{Introduction}

Ensemble control is a rather new research area of control theory which is concerned with a parameter-dependent
``family of systems'' (i.e. an ensemble) instead of a single one. Here, the major challenge is to perform classical control
tasks simultaneously, i.e.~for the entire ensemble via controls that are \emph{independent} of the system parameter.
Starting with the contributions \cite{bcr2010,li2009ensemble}, the topic of ensemble controllability has become an active and growing area in mathematical control theory, cf. \cite{agrachev-baryshnikov-sarychev2016,agrachev2020control,chen2019mcss,Xudong_SICON_2023,Jerome_MCRF_2023,JDE_ensembles_2021,banff_2023,helmke2014uniform,LAZAR2022265,li2011,li_tac_2016,li2020separating,MCSS_Ensembles,schonlein2016controllability,Zeng_scl_2016,shen2017discrete,zeng2016ensemble,zhang2018controllability,zuazua2014averaged}. Ensemble control is located at the crossroad of finite- and infinite-dimensional control theory, operator theory and approximation theory.

The problem of simultaneous stabilization of a parameterized  family of linear systems falls into this setting. In this context, parameter-dependent pole-shifting has been addressed in the 1980s and 1990s. We note that these contributions used {rather} different methods. On the one hand, in \cite{hautussontag1986,sontag_intro_families,sontagwang1990} the algebraic theory of systems over rings is used.
On the other hand, a frequency domain approach for the simultaneous stabilization problem using function theoretic methods was proposed by various authors. For details and more references we refer to the comprehensive monograph \cite{blondel1994simultaneous}  and the more recent contributions \cite{Chen-feedback-Banff,guth2023ensemble, ryan2014simultaneous}. In this work, we will use tools from functional analysis and approximation theory to study the possibilities and limits of ensemble feedback methods for one-parameter families of linear systems.

The paper is organized as follows. In the remainder we introduce notation, recall basic concepts and discuss the novelty of this paper. In Section~\ref{sec:prelims} we recall some facts about matrix multiplication operators. In Proposition~\ref{prop:spec-mult-operator} it is shown that the spectrum of multiplication operators on the space of continuous functions on a perfect parameter space is essential;
a fact we were not able to locate in the literature. Some basics concerning the stability of semigroups generated by bounded operators are recalled. In Section~\ref{sec:staticfeedback} we start our investigation of the stabilization problem using static state feedback. We distinguish between ensemble feedback and multiplication feedback in which the feedback operator is allowed to be parameter dependent. If $\p$ is perfect, then ensemble feedback usually does not allow for stabilization as the essential spectrum is not changed by a compact perturbation. In the case of multiplication feedback we consider the case that the Hermite indices are constant over the parameter space. In this case we are able to give a proof of a version of Heymann's lemma that provides explicit formulas for the desired feedback. In this way the pole-placement and stabilization theorems can be obtained. An application of approximation theoretic results is used to show that if  Hermite indices are constant and certain assumptions on the parameter set that make approximation theoretic results applicable are satisfied, then a multiplication feedback can generate a uniformly ensemble reachable system from a pointwise reachable one. In Section~\ref{sec:homotopy} we briefly indicate an alternative route to parameter-dependent versions of Heymann's lemma that analyze what we call the Heymann bundle over the parameter set $\p$. In this language the veracity of a parametrized Heymann lemma translates into the existence of a global section of the respective bundle. For the simple cases that the parameter set is an interval or a circle a positive answer can be provided here without any assumption on the Hermite indices. We conclude in Section~\ref{sec:conclusions} and point to further open questions. In the Appendix, Section~\ref{sec:appendix}, we provide a characterization of ensemble reachability which summarizes our application of approximation theory. We also provide a short proof of a folklore result that is frequently used without proof. Namely that for a continuous square matrix function with continuous injective eigenfunctions and pairwise disjoint and simple spectra there exists a continuous diagonalization.

\subsection*{Setting and Notation}

The fields of real and complex numbers are denoted $\R,\C$; $\C^n$ and $\C^{n\times m}$ are the spaces of column vectors of size $n$, respectively $n\times m$ matrices with complex entries. For Banach spaces $X,Y$ we denote by $L(X,Y)$ the space of bounded linear operators from $X$ to $Y$ and $L(X):= L(X,X)$.

Let $\p \subset \C$ be the parameter space. We will assume throughout that $\p \subset \C$  is compact. 
Sometimes it will be necessary to assume that $\p$ is perfect, i.e., it does not contain isolated points in addition to being compact. For the approximation theoretic considerations in the appendix further properties are required. 
Let $X(\p)$ be an arbitrary separable Banach space of functions defined on $\p$ with values in $\C$ and let $X_{n,m}(\p)$ denote the space of $(n \times m)$-matrices with entries in $X(\p)$. Furthermore, set $X_n(\p) :=  X_{n,1}(\p)$. Thus $X_n(\p)$ is simply the $n$-fold Cartesian product of $X(\p)$ and therefore again a Banach space. Unless stated otherwise, we assume that $X_n(\p)$ is equipped with the maximum norm of the entrywise norms, i.e.~$\|x\|_{X_n(\p)} := \max_{1 \leq i \leq n}\|x_i\|_{X(\p)}$. As ususal, $C(\p)$ denotes the Banach space of continuous complex-valued functions on $\p$. By $C_n(\p)$ respectively $C_{n,m}(\p)$ we denote the spaces of $n$-vectors, $n\times m$-matrices with entries in $C(\p)$.

It is easy to see that for any $B \in X_{n,m}(\p)$ the input operator
\begin{equation}
\label{eq:def-input-operator}
    {\mathcal B}:\C^m \to X_n(\p), \quad ({\mathcal B}v)(\theta) := B(\theta)v, \quad v \in \C^m, \theta\in \p,
\end{equation}
is well-defined, linear and bounded as it is a multiplication operator with finite dimensional domain $\C^m$. A structural assumption on $X_n(\p)$ is that for any $A \in C_{n,n}(\p)$ the induced multiplication
operator
\begin{equation}
	\label{eq:def-operators-scalar2}
	{\mathcal A}: X_n(\p) \to X_n(\p),\quad ({\mathcal A}f)(\theta) := A(\theta)f(\theta), \quad \theta \in \p,
\end{equation}
is well-defined, linear and bounded.
We formalize this in the following assumption.
\begin{assumption}
\label{ass:basicP}
 The set $\p\subset \C$ is nonempty and compact. The space $X_n(\p)$ is a Banach space of $\C^n$ valued functions such that for every $A\in C_{n,n}(\p)$ the operator $\A$ defined in \eqref{eq:def-operators-scalar2} is a bounded linear operator on $X_n(\p)$.
\end{assumption}

\begin{remark}
 Recall that a Banach algebra $(B,\|\cdot\|)$ is a Banach space equipped with a bilinear multiplication $B\times B\to B$ that is associative and satisfies $\|xy\| \leq \|x\|\|y\|$ for all $x,y\in B$.  A Banach $B$-module is a Banach space $Z$ together with a continuous algebra homomorphism $\rho : B \to L(Z)$. 
 The space $C(\p)$ with the standard supremum norm is a Banach algebra together with pointwise multiplication of functions. 
To rephrase our structural assumption on $X_n(\p)$, we are requiring that $X_n(\p)$ is a Banach module over $C(\p)$. Standard spaces for which this holds are the spaces of continuous functions, $L^p$ spaces or subspaces of such spaces of functions vanishing on a fixed closed subset of $\p$. If $X_n(\p)$ contains all constant functions it is not hard to see that $C_n(\p) \subset X_n(\p)$ is required.
For examples not satisfying the assumption consider spaces of Lipschitz or Hölder functions, or assume that $\p$ is a subset of the unit disc and that $X_n(\p)$ is the space of restrictions of bounded holomorphic functions on the disc  to $\p$ (i.e. $H^\infty$ restricted to $\p$).   
\end{remark}

From now on let the parameter space $\p$ and a Banach space of functions $X_n(\p)$ be fixed so that our standing assumptions are satisfied. Consider $A \in C_{n,n}$ and $B\in X_{n,m}(\p)$ with associated bounded linear operators $\mathcal{A} \in L(X_n(\p))$, $\mathcal{B}\in L(\C^m, X_n(\p))$ defined in \eqref{eq:def-operators-scalar2} resp. \eqref{eq:def-input-operator}. The parameter-dependent system under consideration is of the form
\begin{equation}
	\begin{split}
		\label{eq:sys-par2}
		\tfrac{\partial}{\partial t} x(t,\theta)=A(\theta)x(t,\theta)+B(\theta)u(t) 
        , \quad \theta \in \p,
	\end{split}
\end{equation}
or in discrete-time
\begin{equation}\label{eq:sys-par-discrete2}
	x(t+1,\theta)=A(\theta)x(t,\theta)+B(\theta)u(t), 
    \quad \theta \in \p,
\end{equation}
where $x_0 \in X_n(\p)$, $x(0,\theta)=x_0(\theta)\in\C^n$ denotes the initial conditions for the system with parameter $\theta$. In terms of these matrix multiplication operators, the dynamic equations  \eqref{eq:sys-par2} and \eqref{eq:sys-par-discrete2}
are equivalent to the (infinite dimensional) linear control systems
\begin{align}
		\label{eq:sys-inf2}
		\dot x (t)&={\mathcal A}x(t) + {\mathcal B}u(t),\quad x(0) =x_0 \in X_n(\p),
	\intertext{and correspondingly}
	\label{eq:sys-inf-discrete2}
		x({t}+1)&={\mathcal A}x({t}) + {\mathcal B}u({t}),\quad x(0) =x_0 \in X_n(\p)\,.
		\end{align}
In the continuous-time case, the space of admissible (open-loop) input functions is $L^1_{\mathrm{loc}}(\R_+,\C^m)$ and in the discrete-time case it is $(\C^m)^\N$. From now on $T\geq 0$ is either a nonnegative real or a nonnegative integer and we abbreviate
$U(T):=L^1([0,T],\mathbb{C}^m)$ or
$U(T):=(\C^m)^{T+1}$ depending on the system under consideration. Solutions to
\eqref{eq:sys-par2} and \eqref{eq:sys-par-discrete2} are denoted $  \varphi \big (T,x_0,u\big ) (\theta) :=\varphi \big (T,x_0(\theta),u,\theta \big )$, 
where in the continuous-time case
\begin{equation*}
	\varphi \big (T,x_0,u \big ) (\theta ) =  {\mathrm{e}}^{TA(\theta )}  x_0(\theta) + \int _{0}^{T} {\mathrm{e}}^{(T-\tau )A(\theta )}B(\theta )u(
	\tau )\di{\tau}
\end{equation*}
and in discrete-time
\begin{equation*}
	\varphi \big (T,x_0,u \big ) (\theta ) =  A(\theta )^{T}x_0(\theta)+\sum_{k=0}^{T-1} A(\theta )^{T-1-k}B(\theta )u(k ).
\end{equation*}
In the following we will identify a pair $(A,B) \in C_{n,n}(\p) \times X_{n,m}(\p)$ with the system \eqref{eq:sys-inf2} or \eqref{eq:sys-inf-discrete2} and briefly speak of "the system $(A,B)$". A central notion for this paper is the following version of reachability. We refer to standard literature for the definition of the notion of reachability for finite-dimensional linear systems, e.g. \cite[Chapter~3]{sontag}, \cite[Section 6.1.4]{hinrichsen2026mathematicalII}.

\begin{definition}
\label{def:reachbility}
	A pair $(A,B) \in C_{n,n}(\p)\times X_{n,m}(\p)$ is called
	\begin{enumerate}
		\item[(i)] \textit{pointwise reachable at $\theta_0 \in \p$} if the pair $(A(\theta_0),B(\theta_0))
		\in \C^{n\times n}\times \C^{n\times m}$
		is reachable.
		\item[(ii)] pointwise reachable if it is pointwise reachable at every $\theta_0 \in \p$.
		\item[(iii)] \textit{ensemble reachable} with respect to $X_{n}(\p )$, if for all $f \in X_{n}(\p)$ and $\eps >0$ there exist $T \geq 0$ and an
		input $u \in {U(T)}$ such that
		\begin{equation*}
						\|\varphi (T,u,0) -f\|_{X_{n}(\p)} < \eps .
		\end{equation*}
		\item[(iv)] \textit{uniformly ensemble reachable} if it is ensemble reachable with respect to $C_n(\p)$.
	\end{enumerate}
\end{definition}

\medskip
We note that for $X_n(\p) =C_n(\p)$, we have that uniform ensemble reachability implies pointwise reachability, cf. \cite[Lemma~1]{helmke2014uniform}. In the previous reference $\p$ is assumed to be an interval. However, inspection of the proof shows that the argument extends readily to compact $\p$. In the case $X_n(\p) = L_n^q(\p)$ ensemble reachability (w.r.t. to  $L_n^q(\p)$) implies pointwise reachability for almost all $\theta \in \p$ (cf. \cite[Theorem~7]{JDE_ensembles_2021}). However, Example~5 in \cite{JDE_ensembles_2021} shows that ensemble reachability (w.r.t. to  $L_n^q(\p)$) does not imply pointwise reachability for all parameters $\theta$. So, when considering Lebesgue spaces $L_n^q(\p)$ the condition formulated in Definition~\ref{def:reachbility}\,$(ii)$ need to be interpreted for all $\theta$ with the exception of null sets.  Moreover, it is  shown in \cite[Theorem~3.1.1]{triggiani75}, that a pair $(A,B) \in C_{n,n}(\p)\times X_{n,m}(\p)$ is ensemble reachable with respect to $X_{n}(\p )$
if and only if
\begin{equation}
\label{eq:Kalman-ensemblereach}
	\operatorname{span}\left\{ \operatorname{Im} \mathcal{A}^k\mathcal{B} \, |\, k=0,1,2,3,...  \right\}
\end{equation}
is dense in $X_n(\p)$, where $\operatorname{Im}$ denotes the image.

\subsection*{Novelty and main contribution}

The recent contributions metioned in the introduction of this paper all deal with open-loop inputs. In this work, we consider feedback in combination with open-loop inputs of the form
\begin{equation*}
    u(t,x(t)) = \mathcal{K} x(t) + v(t)
\end{equation*}
given by a feedback operator $\mathcal{K}$ defined on $X_n(\p)$. We will distinguish between the cases where the range of the feedback operator $\mathcal K$ is finite and infinite dimensional.

\medskip
\textit{Feedback operator with finite dimensional range}. An essential assumption in ensemble control is that the inputs are independent of the parameters. In view of this fact, it is natural to consider feedback operators with finite dimensional range, i.e.
\begin{equation*}
	\mathcal{K} : X_n(\p) \to \C^m .
\end{equation*}
In this context, this work explores classical system-theoretic properties. More precisely, in Theorem~\ref{thm:feedback_invariant} we will show that ensemble reachability is invariant under ensemble feedback with finite dimensional range. This is a well-known property in mathematical systems theory. On the other hand, however,  to our surprise we show in Theorem~\ref{thm:stab} that such feedback operators are not appropriate for stabilization. We note that this holds for all infinite-dimensional families of linear systems defined on arbitrary Banach spaces.

\medskip
\textit{Feedback operator with infinite dimensional range}. Motivated by the limitations of feedback operators with finite dimensional range, we also treat the case of infinite-dimensional range. Given that necessary and sufficient conditions for ensemble controllability depend on the particular Banach space under consideration, we limit ourselves to the Banach space of continuous functions. Here we investigate feedback multiplication operators
of the form
\begin{equation*}
	\mathcal{K} :C_n(\p) \to C_m(\p),\quad {\mathcal K} f (\theta) = K(\theta)f(\theta), \qquad K \in C_{m,n}(\p).
\end{equation*}
This type of feedback operators was recently considered in \cite{laa_feedback}, where also the case $K(\theta)\equiv K \in \R^{m \times n}$ is considered for the controlled harmonic oscillator. A main tool in our analysis will be the Hermite indices of a pair $(A.B)$, which are recalled at the beginning of Section~\ref{sec:staticfeedback}. Under the assumption of constant Hermite indices (as a function of $\theta \in \p$) a concrete construction is provided that yields a parametrized version of Heymann's lemma, \cite{heymann1968comments}, \cite[Lemma 8.4.7]{hinrichsen2026mathematicalII}. This lemma famously provides a key simplification in the proof of the pole placement theorem and similarly a parametrized pole placement result is provided here. This result can then be used as a basis for a novel stabilization result. 
In Theorem~\ref{thm:feedback-hermite} we provide new sufficient conditions such that there is a $K \in C_{m,n}(\p)$ so that the continuous family
\begin{equation*}
(A+BK,B) \in C_{n,n}(\p) \times C_{n,m}(\p)
\end{equation*}
is uniformly ensemble reachable. We note that this result improves Theorem~3 in \cite{laa_feedback}, as Theorem~\ref{thm:feedback-hermite} does not require the application of transformations in the state-space and the input space. 

For the case of nonconstant Hermite indices we restrict our attention to parameter sets that are intervals or circles. Also in this case a parameterized version of Heymann's lemma is shown which relies largely on standard tools from algebraic geometry. For more general parameter sets, we believe more tools from homotopy theory or obstruction theory would have to be applied, see e.g. \cite[Chapter VI]{hu1963homotopy}, but this is beyond the scope of the present paper.

 \section{Preliminaries}
 \label{sec:prelims}

In this section we collect auxiliary results from the literature which will be used in the verification of the main results of this paper. Of the following two subsections, the first one is concerned with spectra of the matrix multiplication operator. The second section recalls the relevant stability properties of infinite dimensional linear differential equations.

\subsection{Matrix multiplication operators}

In this section we will show that for the matrix multiplication operator defined on the space of continuous function $C_n(\p)$ several spectra coincide. First we recall the precise definitions. For a Banach space $X$ and $\mathcal A\in L(X)$, the spectrum of   is given by the set
\begin{equation*}
    \sigma_{}(\mathcal A) = \{ \lambda \in \C \mid \lambda-\mathcal A \quad \text{is not invertible in $L(X)$} \}.
\end{equation*}
An operator $\A$ is said to be Fredholm if $\operatorname{dim} \operatorname{Ker}(\A)< \infty$ and $\operatorname{codim} \operatorname{Im}(\A)< \infty$. As in \cite[Ch.~IV, 1.20]{engelnagel} we call
\begin{equation*}
\sigma_{\operatorname{ess}}(\mathcal A) = \{ \lambda \in \C \mid \lambda-\mathcal A \quad \text{is not a Fredholm operator} \}
\end{equation*}
the essential spectrum  of $\mathcal A$ (in the sense of Kato\footnote{There are various different definitions for the essential spectrum, which do not coincide in general. See \cite[Section 1.4]{appell2004nonlinear} for a detailed discussion.}).  We note that for a Fredholm operator, the range $\operatorname{Im}(\A)$ is automatically closed.   Moreover, the approximate point spectrum is given by
\begin{equation*}
\sigma_{q}(\mathcal A) = \{ \lambda \in \C \mid  \, \exists\, (v_k)_{k \in \mathbb N} \subset X \text{ s.t. } \| v_k\| =1  \text{ and }  \lim_{k \to \infty} \| (\lambda I - \mathcal{A})v_k\| =0 \}.
\end{equation*}
The sequences $(v_k)_{k \in \mathbb N} \subset X$ appearing in the previous definition are called Weyl sequences. 
For a comprehensive overview of the spectra of bounded operators and the relationships between their various components, we refer to \cite[Chapter~1]{appell2004nonlinear} and the references therein.

We now concentrate on sets $\p$ that do not contain isolated points, or in other words that are perfect. In this case, we show that for multiplication operators on $X=C_n(\p)$, the spectrum and essential spectrum coincide. This result follows from the application of more general results of Banach algebras and their modules. To keep the paper self-contained we prefer to give a direct proof here. For the case of square integrable functions, this property is shown in \cite{hardt1996spectral}. We start with a preliminary lemma that might be of individual interest.

\begin{lemma}
\label{lem:Pperfect}
Let Assumption~\ref{ass:basicP} hold and
assume furthermore that $\p$ is perfect. 
Then for every $A\in C_{n,n}(\p)$
the kernel of $\mathcal{A}$ is either trivial or infinite-dimensional. In particular, if $\mathcal{A}$ is Fredholm, the kernel is trivial.
\end{lemma}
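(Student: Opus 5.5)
The plan is to use the $C(\p)$-module structure: multiplication by scalar continuous functions commutes with $\mathcal A$, so a single nonzero kernel element $f$ yields many others. Indeed, for $\varphi\in C(\p)$ and $f\in\operatorname{Ker}\mathcal A$ we have $(\mathcal A(\varphi f))(\theta)=A(\theta)\varphi(\theta)f(\theta)=\varphi(\theta)(\mathcal Af)(\theta)=0$. Hence $\varphi f\in\operatorname{Ker}\mathcal A$ for every $\varphi\in C(\p)$. So, assuming $\operatorname{Ker}\mathcal A\neq\{0\}$ and fixing $0\neq f\in\operatorname{Ker}\mathcal A$, it suffices to produce, for every $N\in\N$, functions $\varphi_1,\dots,\varphi_N\in C(\p)$ such that the $\varphi_k f$ are linearly independent. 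The "in particular" clause then follows at once, because a Fredholm operator has finite-dimensional kernel.

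\textbf{Linear independence.} I will choose the $\varphi_k$ with pairwise disjoint closed supports and with $\varphi_k f\neq 0$. Suppose $\sum_k c_k\varphi_k f=0$. By Urysohn's lemma on the compact metric space $\p$, there is $\eta_j\in C(\p)$ equal to $1$ on $\operatorname{supp}\varphi_j$ and $0$ on the other supports. Multiplying the relation by $\eta_j$, which is allowed by the module structure, gives $c_j\varphi_j f=0$, hence $c_j=0$.

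\textbf{Localizing $f$.} Next I need $N$ disjoint regions on which $f$ is "nonzero". Define the support $S$ of $f$ as the set of $\theta\in\p$ such that for every neighbourhood $U$ of $\theta$ there is $\varphi\in C(\p)$ with $\operatorname{supp}\varphi\subset U$ and $\varphi f\neq0$.
\begin{itemize}
\item $S\neq\emptyset$. Otherwise every point has a neighbourhood $U_\theta$ on which all such products vanish. By compactness finitely many $U_\theta$ cover $\p$. A subordinate partition of unity $\sum\psi_j=1$ then gives $f=\sum\psi_j f=0$, a contradiction.
\item If $S$ contains $N$ distinct points, separate them by disjoint open balls. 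Each ball supplies a $\varphi_k$ with support inside it and $\varphi_k f\neq 0$, which completes the argument above.
\end{itemize}

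\textbf{Main obstacle.} The hard step is to show that $S$ is infinite, and this is exactly where perfectness of $\p$ enters.
\begin{itemize}
\item For $X_n(\p)=C_n(\p)$ it is straightforward. Pick $\theta_0$ with $f(\theta_0)\neq0$. By continuity $f\neq0$ on an open neighbourhood $V$ of $\theta_0$. Since $\p$ is perfect, $V$ contains infinitely many points, and each of them lies in $S$: a bump $\varphi$ centred at such a point $\theta$ with small support has $(\varphi f)(\theta)\neq 0$.
\item For $L^p$-type spaces the analogous argument uses that $f$ is nonzero on a set of positive measure. A set of positive measure cannot be concentrated at finitely many points, since points are null. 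Splitting it by disjoint bumps therefore again gives infinitely many points of $S$.
\item For a general module $X_n(\p)$, I would argue by contradiction. Suppose $S=\{\theta_1,\dots,\theta_r\}$ is finite. Then, since the products $\varphi f$ vanish for all $\varphi$ supported away from $S$, the element $f$ would be carried by finitely many points. I would try to rule this out using that functions in $X_n(\p)$ are determined up to the relevant notion of equality by their values off single points, which for a perfect $\p$ and the spaces in question means points carry no mass.
\end{itemize}
I expect this last step to require the most care, namely pinning down precisely which property of $X_n(\p)$ is used.
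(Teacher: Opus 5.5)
For $X_n(\p)=C_n(\p)$ your argument is complete and is essentially the paper's. The paper takes $0\neq v\in\ker\A$ and observes that $U=\{\theta\in\p : v(\theta)\neq 0\}$ is open, hence infinite because $\p$ is perfect. It then notes that $f\mapsto fv$ embeds the infinite-dimensional space of continuous functions supported in $U$ into $\ker\A$; its footnote building that space is your disjoint-balls and Urysohn construction. In this setting your support set $S$ and the partition-of-unity step are not needed, since every point of $U$ lies in $S$ directly. The paper's proof also covers only this case: it writes $\ker\A$ as a subset of $C_n(\p)$ and uses continuity of $v$. Moreover, the lemma is invoked only in Proposition~\ref{prop:spec-mult-operator}, which concerns $C_n(\p)$.

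The step you flag as the main obstacle cannot be completed, because under Assumption~\ref{ass:basicP} alone the statement is false. The property you hope to exploit, that points carry no mass, does not follow from it. Take $\p=[0,1]$, $n=1$, $A(\theta)=\theta$ and $X(\p)=L^2(\p,\mu)$ with $\mu=\delta_0+\lambda$, where $\lambda$ is Lebesgue measure. Multiplication by continuous functions is bounded and $\p$ is perfect, so all hypotheses hold. But $\theta f(\theta)=0$ for $\mu$-a.e.\ $\theta$ forces $f=0$ $\lambda$-a.e.\ while leaving $f(0)$ free, so $\ker\A=\operatorname{span}\{\mathbf{1}_{\{0\}}\}$ is one-dimensional. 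With $\mu=\delta_0$ the space is one-dimensional and $\A=0$ is Fredholm with nontrivial kernel, so the \emph{in particular} clause fails too.

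The same examples show that your $L^p$ bullet needs $\mu$ to be non-atomic. With that hypothesis it does go through: if $S$ were finite, $f$ would vanish a.e.\ outside arbitrarily small balls around finitely many null points. So the repair is a restriction of scope, not a sharper argument. Either state the lemma for $C_n(\p)$ (or $C(\p)$-invariant subspaces of it, or $L^p(\mu)$ with $\mu$ non-atomic), or assume outright what your third bullet tries to derive, namely that no nonzero element of $X_n(\p)$ has finite support $S$. Your reduction via $S$ then proves the lemma in exactly that generality, which is slightly more than the paper's proof gives.
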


\begin{proof} 
The kernel of $\mathcal{A}$ is given by
\[
\ker(\mathcal{A}) = \{ f \in C_n(\p) \mid A(\theta) f(\theta) = 0 \ \forall \theta  \in \p \}.
\]
Assume now that $\ker \A \neq \{0\}$ and let 
$0\neq v \in \ker \A$. By continuity the set $U :=\{ \theta \in \p \;\vert \; v(\theta) \neq 0 \}$ is open in $\p$. As $\p$ does not have isolated points, $U$ contains infinitely many points. It follows that the space
\begin{equation*}
    C_c(\p, U) = \{ f \in C(\p) \ \vert \ \operatorname{supp} f\subset U \}
\end{equation*}
is infinite-dimensional.\footnote{Pick $x_1 \in U$ and $\varepsilon_1>0$ such that $\overline{B}_{\varepsilon_1}(x_1)\cap \p \subset U$ and $U_1 := U \setminus \overline{B}_{\varepsilon_1}(x_1) \neq \emptyset$. Note that by our assumptions $U_1$ contains infinitely many points as it is a nonempty open subset of $\p$. By Urysohn's lemma, \cite[Chapter 4, Lemma 4]{Kell75}, there is a function $f_1\in C(\p)$ such that $f_1(x_1)=1$ and $
\operatorname{supp} f_1 \subset \overline{B}_{\varepsilon_1/2}(x_1)$. Now repeat this process by choosing $x_2 \in U_1$ and $\varepsilon_2>0$ such that $\overline{B}_{\varepsilon_2}(x_2)\cap \p \subset U_1$ and $U_2 := U_1 \setminus \overline{B}_{\varepsilon_2}(x_2) \neq \emptyset$, etc. and repeat inductively. This creates a sequence of continuous functions with pairwise nonintersecting supports in $U$. These are linearly independent.}
Now for every $f\in C_c(\p, U)$ it follows that $fv\in \ker \A$, as $\A (fv) = f \A v = 0$. As $C_c(\p, U)$ is infinite-dimensional, it follows that $\ker \A$ is also infinite-dimensional.

If $\mathcal{A}$ is Fredholm, then by definition its kernel is finite-dimensional and thus by the previous argument equal to $\{0\}$. 
\end{proof}

\begin{proposition}\label{prop:spec-mult-operator}
	Let $\p$ be nonempty and compact. Then for every $A \in C_{n,n}(\p)$ the following statements hold.
    \begin{enumerate}
        \item[(i)] The spectrum of the matrix multiplication operator $\mathcal{A}\colon C_n(\p) \to C_n(\p) $ is given by 
        \begin{equation*}
            \sigma (\mathcal{A}) =  \bigcup_{\theta \in \p}  \sigma (A(\theta)).
        \end{equation*}
         \item[(ii)] If, in addition, $\p$ is perfect, then 
	\begin{align*}
		\sigma_{q}(\mathcal A) =	\sigma_{\operatorname{ess}}(\mathcal A) = \sigma (\mathcal{A}) =  \bigcup_{\theta \in \p}  \sigma (A(\theta)).
	\end{align*}         
    \end{enumerate}
    
\end{proposition}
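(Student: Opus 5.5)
For part (i) I will prove the two inclusions directly. If $\lambda\notin\bigcup_{\theta}\sigma(A(\theta))$, then $\det(\lambda I-A(\theta))\neq 0$ for every $\theta\in\p$. Since $\theta\mapsto(\lambda I-A(\theta))^{-1}$ is given by Cramer's rule (adjugate divided by a nonvanishing continuous determinant), it lies in $C_{n,n}(\p)$. The associated multiplication operator is therefore bounded on $C_n(\p)$ and is a two-sided inverse of $\lambda-\mathcal A$, so $\lambda\notin\sigma(\mathcal A)$.

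Conversely, let $\lambda\in\sigma(A(\theta_0))$ and pick $w\in\C^n$ with $w^*(\lambda I-A(\theta_0))=0$, $w\neq0$. For every $f\in C_n(\p)$ we get $w^*\big((\lambda-\mathcal A)f\big)(\theta_0)=0$. Hence the constant function $w$ (conjugated appropriately, i.e.\ the constant vector $g\equiv w$, for which $w^*g(\theta_0)=\|w\|^2\ne0$) is not in the range. So $\lambda-\mathcal A$ is not surjective and $\lambda\in\sigma(\mathcal A)$. This gives (i) without perfectness.

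For part (ii) I will establish the chain $\bigcup_\theta\sigma(A(\theta))\subset\sigma_q(\mathcal A)$, then $\sigma_q(\mathcal A)\subset\sigma_{\operatorname{ess}}(\mathcal A)$, with $\sigma_{\operatorname{ess}}\subset\sigma$ trivial; together with (i) this closes the chain.

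\emph{Step 1: $\bigcup_\theta\sigma(A(\theta))\subset\sigma_q(\mathcal A)$.} Let $\lambda\in\sigma(A(\theta_0))$ with unit eigenvector $x$. Since $\theta_0$ is not isolated, the functions $\phi_k\in C(\p)$ given by Urysohn with $\phi_k(\theta_0)=1$, $0\le\phi_k\le1$, supported in $\overline B_{1/k}(\theta_0)\cap\p$, are available. Set $v_k=\phi_k x$, so $\|v_k\|_\infty=1$. Then
\[
\|(\lambda-\mathcal A)v_k\|_\infty\le\sup_{|\theta-\theta_0|\le1/k}\|(A(\theta_0)-A(\theta))x\|\to0
\]
by continuity of $A$, so $(v_k)$ is a Weyl sequence. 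Perfectness is not even needed here, but it is needed in Step 2.

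\emph{Step 2: $\sigma_q(\mathcal A)\subset\sigma_{\operatorname{ess}}(\mathcal A)$.} This is where Lemma~\ref{lem:Pperfect} enters. Suppose $\lambda\in\sigma_q(\mathcal A)$ but $\lambda-\mathcal A$ is Fredholm. Applying Lemma~\ref{lem:Pperfect} to the multiplication operator with symbol $\lambda I-A\in C_{n,n}(\p)$, its kernel is trivial. Its range is closed, as noted for Fredholm operators. An injective operator with closed range is bounded below by the open mapping theorem, i.e.\ $\|(\lambda-\mathcal A)v\|\ge c\|v\|$ with $c>0$, which contradicts the existence of a Weyl sequence.

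The only delicate point is this last step, specifically ensuring that the Fredholm property gives a closed range together with a trivial kernel. Once Lemma~\ref{lem:Pperfect} rules out a nontrivial finite-dimensional kernel, the rest is routine.
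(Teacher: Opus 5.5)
Your proof is correct. Part (ii) follows the paper's argument almost verbatim. You build Weyl sequences by multiplying an eigenvector of $A(\theta_0)$ with bump functions concentrated at $\theta_0$. You then contradict the Fredholm property using Lemma~\ref{lem:Pperfect} (trivial kernel), closedness of the range, and the resulting lower bound $\|v\|\le C\|(\lambda I-\mathcal A)v\|$. The only organizational difference is that the paper proves $\sigma(\mathcal A)\subset\sigma_q(\mathcal A)$ and $\sigma(\mathcal A)\subset\sigma_{\operatorname{ess}}(\mathcal A)$ separately. Your chain $\bigcup_\theta\sigma(A(\theta))\subset\sigma_q(\mathcal A)\subset\sigma_{\operatorname{ess}}(\mathcal A)\subset\sigma(\mathcal A)$ instead pinpoints the single step that needs perfectness.

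The real difference is part (i), which the paper does not prove but takes from Greiner--Nagel (and Hardt--Wagenf\"uhrer). Your direct argument has two pieces. Cramer's rule gives a continuous resolvent symbol. A left eigenvector $w$ shows that the constant function $w$ is not in the range, since every $g$ in the range satisfies $w^*g(\theta_0)=0$. This makes the proposition self-contained at the cost of a few lines. Together with your Step 1, it also shows that $\sigma_q(\mathcal A)=\sigma(\mathcal A)=\bigcup_\theta\sigma(A(\theta))$ holds for every compact $\p$. That sharpens the remark following the proposition in the paper: at an isolated point only the equality with $\sigma_{\operatorname{ess}}(\mathcal A)$ can fail.
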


\begin{proof} 
(i) This is a special case of \cite[Lemma 7.1]{nagelone}.
See also \cite[Examples~2.7.2]{hardt1996spectral}. 

(ii) By definition, we have $	\sigma_{\operatorname{ess}}(\mathcal A) \subset  \sigma (\mathcal{A})$ and $ \sigma_{q}(\mathcal{A}) \subset \sigma(\mathcal{A}) $. To complete the proof, it remains to show the reverse inclusions $	  \sigma (\mathcal{A}) \subset \sigma_{q}(\mathcal A) $ and  $	  \sigma (\mathcal{A}) \subset \sigma_{\operatorname{ess}}(\mathcal A)$.

To see that $\sigma(\mathcal{A}) \subset \sigma_q(\mathcal{A})$,   
let $\lambda_0 \in   \sigma (\mathcal{A})$. By the characterization in the last identity of the claim, there  are $\theta_0 \in \p$ and  $v_0 \in \C^n$ such that $\|v_0\|=1$ and 
\begin{equation*}
A(\theta_0)v_0 = \lambda_0 v_0.
\end{equation*}
Consider a sequence of functions $(f_k)_{k \in \N} \subset C^{\infty}( \mathbb C,[0,1])$ satisfying
\begin{equation}
\label{eq:vk-definition}
f_k(\theta_0)=1 \qquad \text{and} \qquad f_k(\zeta)=0 \quad \text{ for all } \zeta \in \C \text{ with } | \zeta - \theta_0| \geq \tfrac{1}{k}.
\end{equation}
Hence, the sequence    $(v_k)_{k \in \mathbb N} \subset C_n(\p)   $,  $v_k(\theta):= f_k(\theta)v_0$, $\theta \in \p$, satisfies $\|v_k\|_{C_n(\p)} =1$. To see that $(v_k)_{k \in \mathbb N}$ is a Weyl sequence for $\lambda_0$, we have to verify that 
 $$
 \lim_{k \to \infty} \| (\lambda_0I -\mathcal{A})v_k\|_{C_n(\p)} =0.
 $$
 Fix $\e>0$.  For $k\in \N$ define  $\p_k:=\{  \theta \in \p \mid |\theta - \theta_0| < \frac{1}{k}\}$. Then, by construction we have that
 $$
  \| (\lambda_0I -\mathcal{A})v_k\|_{C_n(\p)} =
  \max_{\theta \in \p_k} \| f_k(\theta)(\lambda_0 v_0 - A(\theta)v_0) \| \leq \max_{\theta \in \p_k} \| \lambda_0 v_0 - A(\theta)v_0 \|.
  $$
Since the function $g(\theta) =  \| \lambda_0 v_0 - A(\theta)v_0 \|$ is continuous on $\p$ and satisfies $g(\theta_0)=0$, there is a $\delta >0$ (depending on $\varepsilon$) such that 
$$
g(\theta) =  \| \lambda_0 v_0 - A(\theta)v_0 \|  < \e \qquad \text{ for all } \quad | \theta - \theta_0| < \delta.$$
Hence,  we conclude that
$$ \max_{\theta \in \p_k} \| \lambda_0 v_0 - A(\theta)v_0 \| < \e \qquad \text{ for all }k > \frac{1}{\delta}.$$
As $\varepsilon>0$ was arbitrary, this shows that $\lambda_0 \in \sigma_q(\A)$. 

To show that $\sigma(\mathcal{A}) \subset \sigma_{\operatorname{ess}}(\mathcal{A})$, let $\lambda_0 \in   \sigma (\mathcal{A})$ and suppose to the contrary that $\lambda_0 \not \in \sigma_{\operatorname{ess}}(\mathcal{A})$. Then, by definition,  the operator $\lambda_0 I - \mathcal{A}$ is Fredholm. In particular, its range is closed and by Lemma~\ref{lem:Pperfect} its kernel is $\{0\}$. In other words, $\lambda_0 I - \mathcal{A}$ is injective and we may apply \cite[Theorem 3.12]{schechter2002principles}. This result shows that there is a constant $C>0$ such that
$$
\| v\|_{C_n(\p)} \leq C \| (\lambda_0 I- \mathcal{A})v\|_{C_n(\p)}
$$
for all $v \in C_n(\p)$. Considering the sequence $v_k$ as defined in \eqref{eq:vk-definition} and choosing $k$ sufficiently large, we obtain a contradiction. This shows the assertion.
\end{proof}

\begin{remark}
     The statement of Proposition~\ref{prop:spec-mult-operator}\,(ii) is evidently false without the assumption that $\p$ is perfect. In this case, an isolated point $\theta^*$ in $\p$ gives rise to eigenvalues for instance for $A$ given by $A(\theta)=0$, $\theta \in \p\setminus \{\theta^*\}$ and $A(\theta^*)\neq 0$. 
\end{remark}

\subsection*{Stability of linear infinite dimensional systems}{}

\phantom{f}

Let $A\in C_{n,n}(\p)$ and $\mathcal{A}$ be the associated mulitplication operator on $X_n(\p)$.
In order to study the stabilization problem for the class of infinite dimensional systems given by \eqref{eq:sys-inf2} and \eqref{eq:sys-inf-discrete2}, it is required to consider the stability properties of the uncontrolled system
\begin{align}
	\begin{split}
		\label{eq:sys-infinite-open}
		\dot x (t)&=  \mathcal{A} x(t)\\
		x(0)&=x_0.
	\end{split}
\end{align}
The multiplication operator $\mathcal{A}$ generates the semigroup
\begin{align}\label{eq:T-semi}
	T(t)\colon X_n(\p) \to X_n(\p),\quad T(t)f (\theta) := e^{tA(\theta)} f(\theta).
\end{align}
It follows from \cite[Chapter~1, Theorem~3.7]{engelnagel}  that the semigroup $\{T(t)\}_{t\geq0}$ is uniformly continuous, i.e. the map $t\mapsto T(t)$ is continuous with respect to the uniform operator topology.  Moreover, from \cite[Chapter~1, Section~3]{engelnagel} we deduce the following characterization of stability for system \eqref{eq:sys-infinite-open}.

\medskip

\begin{proposition}
	\label{prop:stability}
	Let $A \in C_{n,n}(\p)$ and let $\mathcal A$ denote the corresponding matrix multiplication operator defined on a Banach space $X_n(\p)$. Then, for the linear system \eqref{eq:sys-infinite-open} the following statements are equivalent.
	\begin{enumerate}
		\item The origin is uniformly asymptotically stable, i.e. 		one has
		$$ \lim_{t \to \infty} \| T(t) \| = 0.$$
		\item The origin is exponentially stable, i.e. there are constants $M\geq 1$ and $\gamma >0$ such that $\|T(t)\| \leq M e^{-\gamma t}$.
		\item The spectrum of $\mathcal{A}$ is contained in the open left half plane, i.e.
		$$\sigma( \mathcal{A}) \subset \{ z \in \C \mid \operatorname{Re}z < 0 \}.$$
	\end{enumerate}
\end{proposition}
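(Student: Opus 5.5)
The plan is to prove the cycle (1)$\Rightarrow$(3)$\Rightarrow$(2)$\Rightarrow$(1), using that $\mathcal A$ is a bounded operator on the Banach space $X_n(\p)$ (Assumption~\ref{ass:basicP}). The semigroup $T(t)=e^{t\mathcal A}$ is therefore given by the norm-convergent exponential series. Its pointwise action is the formula \eqref{eq:T-semi}, since $(\mathcal A^k f)(\theta)=A(\theta)^k f(\theta)$.

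The implication (2)$\Rightarrow$(1) is immediate from $\|T(t)\|\le Me^{-\gamma t}\to 0$.

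For (1)$\Rightarrow$(2) I will use the standard semigroup argument. By (1) there is $t_0>0$ with $\|T(t_0)\|\le q<1$. Writing $t=kt_0+s$ with $s\in[0,t_0)$ and using $\sup_{s\in[0,t_0]}\|T(s)\|\le e^{t_0\|\mathcal A\|}=:M_0$, we get
\[
\|T(t)\|\le M_0\, q^{k}\le M_0 q^{-1} e^{-\gamma t},\qquad \gamma:=-\tfrac{\ln q}{t_0}>0 .
\]
This gives (2) with $M:=\max\{1,M_0q^{-1}\}$.

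The remaining equivalence of (3) with (2) is where the spectral information enters, and it is the main step. For a bounded generator the spectral mapping theorem $\sigma(e^{t\mathcal A})=e^{t\sigma(\mathcal A)}$ holds, by the holomorphic functional calculus; cf. \cite[Chapter~1, Section~3]{engelnagel}. Combined with the spectral radius formula, it shows that the growth bound $\omega_0:=\lim_{t\to\infty}\tfrac1t\ln\|T(t)\|$ equals the spectral bound $s(\mathcal A):=\max\{\operatorname{Re}\lambda\mid\lambda\in\sigma(\mathcal A)\}$. The maximum is attained because $\sigma(\mathcal A)$ is compact and nonempty. The derivation runs as follows: $r(T(1))=\lim_k\|T(k)\|^{1/k}=e^{\omega_0}$, and by spectral mapping $r(T(1))=\max|e^{\sigma(\mathcal A)}|=e^{s(\mathcal A)}$.

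Given $\omega_0=s(\mathcal A)$, assume (3). Compactness gives $s(\mathcal A)<0$. Choose $\gamma\in(0,-s(\mathcal A))$; the definition of $\omega_0$ as a limit yields $\|T(t)\|\le Me^{-\gamma t}$ for some $M\ge1$, which is (2). Conversely, (2) gives $\omega_0\le-\gamma<0$, hence $s(\mathcal A)<0$, which is (3).

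I expect no real obstacle. The only care needed is to note that the result holds for the abstract $X_n(\p)$, because everything uses only boundedness of $\mathcal A$. No explicit description of $\sigma(\mathcal A)$ is required, although on $C_n(\p)$ one may use Proposition~\ref{prop:spec-mult-operator} to rewrite (3) as $\sigma(A(\theta))\subset\{\operatorname{Re}z<0\}$ for all $\theta\in\p$.
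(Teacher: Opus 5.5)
Your argument is correct and is essentially the paper's approach. The paper gives no separate proof; it deduces the equivalences from the standard theory of uniformly continuous semigroups in \cite[Chapter~1, Section~3]{engelnagel}, and you have simply written out those standard steps (the submultiplicativity argument for (1)$\Rightarrow$(2), and growth bound $=$ spectral bound via the spectral mapping theorem for the bounded generator $\mathcal A$, using only boundedness of $\mathcal A$ on $X_n(\p)$).
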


Note that \cite[Ch.~I, Sec.~3, Exercise~4.8]{engelnagel} gives an example of a multiplication operator on $C_0(\R,\C)$ which generates an asymptotically stable semigroup in the sense that
$$ \lim_{t \to \infty} \| T(t)x_0\| = 0$$
for all $x_0\in C_0(\R,\C)$, but the semigroup is not uniformly asymptotically stable.

\section{Static state feedback for families on Banach spaces}
\label{sec:staticfeedback}

In this section we continue the systematic investigation of feedback methods for systems described by \eqref{eq:sys-inf2} or \eqref{eq:sys-inf-discrete2}.
We will distinguish two possible ways of implementing a linear static state feedback. We speak of {\it ensemble feedback operators}, if the feedback is given by a bounded linear operator
\begin{equation}
	\label{eq:ensemble-feedback-operator} 
	\mathcal{F} :X_n(\p) \to \C^m.
\end{equation}
More generally, we also consider {\it multiplication feedback operators} given by multiplication operators, i.e. by linear, bounded feedback operators defined by $K \in C_{m,n}(\p)$ and which are
of the form
\begin{equation}
	\label{eq:sys-infinite-feedback-multiplication} \mathcal{K} :C_n(\p) \to C_m(\p),\quad ({\mathcal K} x) (\theta) = K(\theta)x(\theta), \quad  x\in C_n(\p), \theta \in \p.
\end{equation}

Recall that a central point in the theory of ensemble reachability is that the input $u$ does not depend on the parameter $\theta\in \p$. Rather, it serves as an input applied simultaneously for all parameters. From this point of view the feedback operators satisfying \eqref{eq:ensemble-feedback-operator} appear to be more natural.

We emphasize that, because $\mathcal F$ has finite dimensional range, ensemble feedback operators are  automatically compact, cf. \cite[Theorem~4.10]{bressan-LN-functional-analysis}. Natural choices for such feedback operators might be given by the integral operator with a kernel $K\in C_{m,n}(\p)$ defined by
\[ \mathcal F f = \int_{\p} K(\theta) f(\theta) \di{\theta}, \qquad f\in C_n(\p),\]
or the weighted average operator (for fixed $\theta_1,\ldots,\theta_N\in\p$, $K_1,\ldots,K_N\in \C^{m\times n}$)
\[ \mathcal F f = \sum_{k=1}^N  K_k f(\theta_k), \qquad   f\in C_n(\p) .\]
Note that, the latter serve as examples and the subsequent analysis is not limited to these choices.

\subsection{Ensemble feedback}

Let $X_n(\p)$ denote a separable Banach space of functions from the parameter space $\p$ to $\C^n$ such that Assumption~\ref{ass:basicP} holds. Thus the multiplication operators $\mathcal A \colon X_n(\p) \to X_n(\p)$ and $\mathcal{B} \colon \C^m \to X_n(\p)$ are bounded linear. Then, using an bounded linear ensemble feedback operator $\mathcal F \colon X_n(\p) \to \C^m$, the overall systems is given by
\begin{equation}
	\begin{split}
		\label{eq:sys-infinite-feedback}
		\dot x (t)&= \left( \mathcal{A} +  \mathcal{B} \, \mathcal F \right) x(t)+  \mathcal{B}u(t)
		\end{split}
\end{equation}
The first result is obtained by using arguments of \cite[Chapter~8]{hinrichsen2026mathematicalII}.

\medskip

\begin{theorem}[Ensemble reachability is invariant under ensemble feedback]\label{thm:feedback_invariant}
    Let $X_{n}(\p)$ satisfy Assumption~\ref{ass:basicP}. Consider $A \in C_{n,n}(\p)$ and $B\in X_{n,m}(\p)$ with associated bounded linear operators $\mathcal{A} \in L(X_n(\p))$, $\mathcal{B}\in L(\C^m, X_n(\p))$. Then for every $\mathcal{F}\in L(X_n(\p),\C^m)$ the pair $(\mathcal{A},\mathcal{B})$ is ensemble reachable if and only if $(\mathcal{A}+ \mathcal{BF},\mathcal{B})$ is ensemble reachable.
\end{theorem}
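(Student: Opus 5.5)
The plan is to use the Kalman-type characterization \eqref{eq:Kalman-ensemblereach} from \cite[Theorem~3.1.1]{triggiani75}. It says that $(\mathcal A,\mathcal B)$ is ensemble reachable if and only if
\[
\mathcal R(\mathcal A,\mathcal B):=\operatorname{span}\{\operatorname{Im}\mathcal A^k\mathcal B \mid k\geq 0\}
\]
is dense in $X_n(\p)$. This characterization only involves $\mathcal A\in L(X_n(\p))$ and $\mathcal B\in L(\C^m,X_n(\p))$. It does not use the multiplication structure, so it applies equally to the bounded operator $\mathcal A+\mathcal B\mathcal F$, which is generally not a multiplication operator. The continuous- and discrete-time cases are handled simultaneously, since the criterion is the same.

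The key step is the algebraic identity
\[
\mathcal R(\mathcal A+\mathcal B\mathcal F,\mathcal B)=\mathcal R(\mathcal A,\mathcal B),
\]
proved as in the finite-dimensional case \cite[Chapter~8]{hinrichsen2026mathematicalII}. Write $\mathcal R_k(\mathcal A,\mathcal B):=\operatorname{span}\{\operatorname{Im}\mathcal A^j\mathcal B\mid 0\le j\le k\}$. By induction on $k$ I will show
\[
\mathcal R_k(\mathcal A+\mathcal B\mathcal F,\mathcal B)\subset \mathcal R_k(\mathcal A,\mathcal B).
\]
The case $k=0$ is trivial. For the inductive step, take $v=(\mathcal A+\mathcal B\mathcal F)^{k+1}\mathcal B u$ and set $w:=(\mathcal A+\mathcal B\mathcal F)^{k}\mathcal B u$, so that
\[
v=\mathcal A w+\mathcal B(\mathcal F w).
\]
By the induction hypothesis $w\in\mathcal R_k(\mathcal A,\mathcal B)$, hence $\mathcal A w\in\mathcal R_{k+1}(\mathcal A,\mathcal B)$. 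Moreover $\mathcal B(\mathcal F w)\in\operatorname{Im}\mathcal B$, because $\mathcal F w\in\C^m$. This is the point where the finite-dimensional range of $\mathcal F$ enters: it guarantees that $\mathcal B\mathcal F$ maps into $\operatorname{Im}\mathcal B$. Taking the union over $k$ gives the inclusion of the full spans. Applying the same argument to the pair $(\mathcal A+\mathcal B\mathcal F,\mathcal B)$ with feedback $-\mathcal F$ gives the reverse inclusion, so the two spans coincide.

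Density of one span is then equivalent to density of the other, and the theorem follows. The only step needing care is confirming that the cited characterization from \cite{triggiani75} is valid for arbitrary bounded $\mathcal A$ on a Banach space, and not merely for multiplication operators. If needed, I would verify it directly. The reachable set is the image of $U(T)$ under $u\mapsto\int_0^T e^{(T-\tau)\mathcal A}\mathcal B u(\tau)\,d\tau$, or the corresponding finite sum in discrete time. Its closure over all $T$ is the closed span of $\{\mathcal A^k\mathcal B\}$: one inclusion follows from the power series of $e^{t\mathcal A}$, and the other from a Hahn--Banach argument, since a functional vanishing on all reachable states has vanishing derivatives of $t\mapsto \phi(e^{t\mathcal A}\mathcal B v)$ at $t=0$. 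I expect no real obstacle beyond this bookkeeping.
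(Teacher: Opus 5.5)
Your proposal is correct and follows essentially the same route as the paper. Both use the density criterion \eqref{eq:Kalman-ensemblereach}, run an induction showing that the spans generated by $(\mathcal A+\mathcal B\mathcal F,\mathcal B)$ lie in those of $(\mathcal A,\mathcal B)$ because $\mathcal B\mathcal F w\in\operatorname{Im}\mathcal B$, and get the reverse inclusion by applying the same argument with the feedback $-\mathcal F$. You also note explicitly that the criterion must hold for the non-multiplication operator $\mathcal A+\mathcal B\mathcal F$, and sketch why it does for arbitrary bounded operators; this makes precise a point the paper uses only implicitly.
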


\medskip
\begin{proof}
    Using the characterization of ensemble reachability in \eqref{eq:Kalman-ensemblereach},
	we shall show that for any ensemble feedback operator $\mathcal F \in L(X_n(\p),\C^m)$ it holds that
	\begin{equation}
    \label{eq:kalman-proof1}
		\operatorname{span}\left\{ \operatorname{Im} \mathcal{A}^k\mathcal{B} \, |\, k=0,1,2,3,...  \right\} =  \operatorname{span}\left\{ \operatorname{Im} (\mathcal{A}+ \mathcal{B}\mathcal{F})^k\mathcal{B} \, |\, k=0,1,2,3,...  \right\}.
	\end{equation}
	We first show $\supseteq$ by induction. To see this, we verify that for all $k=1,2,3,...$ one has
	\begin{equation*}
		(\mathcal{A}+ \mathcal{B}\mathcal{F})^k ~ \operatorname{Im}\mathcal{B} \ 
        \subseteq \
		\mathcal{A}^k \operatorname{Im}\mathcal{B} + \cdots + \operatorname{Im}\mathcal{B}.
	\end{equation*}
	 Let $k=1$ and $f \in  \operatorname{Im}\mathcal{B}$. Then, as $\mathcal{F}f \in \C^m$ we get
	\begin{equation*}
		(\mathcal{A}+ \mathcal{B}\mathcal{F})  f =  \mathcal{A}f +  \mathcal{B} \mathcal{F} f   \in
		\mathcal{A} \operatorname{Im}\mathcal{B}  + \operatorname{Im}\mathcal{B}.
	\end{equation*}
	Supposing the claim is true for some $k \in \mathbb{N}$ we consider $k+1$, i.e. 
	\begin{align*}
		(\mathcal{A}+ \mathcal{B}\mathcal{F})^{k+1}  f &=
		(\mathcal{A}+ \mathcal{B}\mathcal{F}) (\mathcal{A}+ \mathcal{B}\mathcal{F})^{k}  f = (\mathcal{A}+ \mathcal{B}\mathcal{F}) g\\
		&= \mathcal{A} g + \mathcal{B} \mathcal{F} g,
	\end{align*}
	for some $g \in \mathcal{A}^k \operatorname{Im}\mathcal{B} + \cdots + \operatorname{Im}\mathcal{B}$. Using again that  $\mathcal{F}g \in \C^m$ it follows that $\mathcal{B} \mathcal{F} g \in \operatorname{Im} \mathcal{B}$. Hence, we obtain
	\begin{equation*}
		(\mathcal{A}+ \mathcal{B}\mathcal{F})^k ~ \operatorname{Im}\mathcal{B}  =
		\mathcal{A}^k \operatorname{Im}\mathcal{B} + \cdots + \operatorname{Im}\mathcal{B}.
	\end{equation*}
	This completes the induction and $\supseteq$ is shown for \eqref{eq:kalman-proof1}.
	
	To see that in \eqref{eq:kalman-proof1} also $\subseteq$ holds, it is sufficient to apply $\supseteq$ to
	$\tilde {\mathcal{A}}:= \mathcal{A} + \mathcal{B} \mathcal{F}$ and
	$ \tilde{\mathcal{F} }:= - \mathcal{F}$.
\end{proof}

\begin{remark}
    As it is easy to miss, we point out that the finite-dimensionality of $\mathcal{F}$ is used explicitly when we need to conclude that $\mathcal{BF}g$ lies in the image of $\mathcal{B}$. In comparison, we note that
    it will be a consequence of Theorem~\ref{thm:feedback-hermite} that ensemble reachability is not an invariant under multiplication feedback.
\end{remark}

We now turn to the stabilization problem and recall the definition, cf. \cite{pritchard1981stability,triggiani1975stabilizability}.

\medskip
\begin{definition}
	A pair $(A,B) \in C_{n,n}(\p) \times X_{n,m}(\p)$ is called (ensemble) stabilizable if there is exists a linear bounded ensemble feedback operator $\mathcal{F} \colon X_n(\p) \to \C^m$ such that the closed-loop system defined by $\mathcal{A+BF}$ is exponentially stable.
\end{definition}

\medskip

We note that this definition is in line with the classical infinite-dimensional literature. There it is common to demand the existence of a bounded linear feedback operator so that the closed-loop system is exponentially stable, cf. \cite[Definition~8.1.1]{curtainzwart2020}, \cite[Ch.~VI, Definition~8.23]{engelnagel}. This is because in infinite-dimensions uniform asymptotic stability (or equivalently exponential stability) is in general not characterized in terms of the spectrum. cf. \cite{triggiani1975pathological}.

\medskip
\begin{theorem}\label{thm:stab}
Let Assumption~\ref{ass:basicP} hold and
assume furthermore that $\p$ is perfect. 
	Suppose the pair $(A,B) \in C_{n,n}(\p) \times X_{n,m}(\p)$ is not exponentially stable. Then, it is not exponentially stabilizable by  ensemble feedback, i.e., for any bounded linear ensemble feedback operator $\mathcal F \colon X_n(\p) \to \C^m$ the closed-loop system $\mathcal{A+BF}$ is not exponentially stable.
\end{theorem}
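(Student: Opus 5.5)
The idea is that an ensemble feedback is a compact perturbation, and compact perturbations cannot move the essential spectrum. Combining this with the fact that on a perfect parameter set the whole spectrum of a multiplication operator is essential, the unstable spectrum of $\A$ survives every feedback. By Proposition~\ref{prop:stability}, the assumption that $(A,B)$ is not exponentially stable means that $\sigma(\A)$ contains some $\lambda_0$ with $\operatorname{Re}\lambda_0\geq 0$. The goal is then to show that $\lambda_0\in\sigma(\A+\mathcal{BF})$ for every $\mathcal F\in L(X_n(\p),\C^m)$. Proposition~\ref{prop:stability} then gives that $\A+\mathcal{BF}$ is not exponentially stable. That proposition is stated for multiplication operators, but the implication actually used, namely that exponential stability forces the spectrum into the open left half plane, holds for any uniformly continuous semigroup via the spectral mapping $e^{t\sigma(\mathcal{M})}\subset\sigma(e^{t\mathcal M})$ and the spectral radius bound.

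\textbf{Step 1: $\lambda_0\in\sigma_{\operatorname{ess}}(\A)$.} In the case $X_n(\p)=C_n(\p)$ this is exactly Proposition~\ref{prop:spec-mult-operator}(ii). For a general $X_n(\p)$ satisfying Assumption~\ref{ass:basicP}, I would argue as in that proof, using Lemma~\ref{lem:Pperfect}, whose argument only uses the $C(\p)$-module structure.

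Suppose $\lambda_0 I-\A$ were Fredholm. Its kernel would then be finite-dimensional and hence trivial, since $\ker(\lambda_0-\A)$ is invariant under multiplication by $C(\p)$ and $\p$ is perfect. Its range would be closed, so there would be a lower bound $\|v\|\le C\|(\lambda_0-\A)v\|$.

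To contradict this, I would take $\theta_0$ and a unit vector $v_0$ with $A(\theta_0)v_0=\lambda_0v_0$, and form localized vectors $f_k\cdot w$ with cutoff functions $f_k$ supported near $\theta_0$ and $w$ suitable. For general $X_n(\p)$ I would first write $\lambda_0-A(\theta)=(\lambda_0-A(\theta_0))+R(\theta)$, where $R$ is small near $\theta_0$. This reduces the question to a multiplication operator with a constant singular matrix plus a small perturbation, whose kernel contains $f w_0$ for every $f$.

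This generality is the main obstacle. If needed, I would restrict the statement's proof to $X_n(\p)=C_n(\p)$ (or spaces where Proposition~\ref{prop:spec-mult-operator} applies) and cite it directly.

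\textbf{Step 2: compact perturbation.} The operator $\mathcal{BF}$ has range contained in $\operatorname{Im}\mathcal B$, which has dimension at most $m$. So $\mathcal{BF}$ is finite rank and therefore compact, as noted after \eqref{eq:ensemble-feedback-operator}. By the stability of Fredholm operators under compact perturbations (Kato; \cite[Ch.~IV, 1.20]{engelnagel}), $\sigma_{\operatorname{ess}}(\A+\mathcal{BF})=\sigma_{\operatorname{ess}}(\A)$. Hence $\lambda_0\in\sigma_{\operatorname{ess}}(\A+\mathcal{BF})\subset\sigma(\A+\mathcal{BF})$ with $\operatorname{Re}\lambda_0\ge 0$, and the conclusion follows from the first paragraph.
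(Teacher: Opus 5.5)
With the restriction $X_n(\p)=C_n(\p)$, your argument is the paper's proof. $\mathcal{BF}$ has finite rank and is therefore compact, the Kato essential spectrum is invariant under compact perturbations, Proposition~\ref{prop:spec-mult-operator}(ii) gives $\sigma(\A)=\sigma_{\operatorname{ess}}(\A)$, and the spectral characterization of exponential stability finishes. Your spectral-radius remark is a correct patch of a small imprecision in the paper: the closed-loop operator $\A+\mathcal{BF}$ is not a multiplication operator, so Proposition~\ref{prop:stability} does not literally apply to it.

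Your planned extension of Step~1 to a general $X_n(\p)$ would fail, however, so the fallback you mention is necessary rather than optional. The proof of Lemma~\ref{lem:Pperfect} is not purely module-theoretic. It uses that a kernel element $v$ is a \emph{continuous} function, so that $\{v\neq 0\}$ is open, and hence infinite when $\p$ is perfect. Your claim that a nonzero $C(\p)$-invariant kernel must be infinite-dimensional is false in general.

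Take $\p=[0,1]$ and $X_n(\p)=L^2_n(\p,\delta_0)$, an $L^2$ space over a measure with an atom. This satisfies Assumption~\ref{ass:basicP}, yet $X_n(\p)\cong\C^n$ and $\A\cong A(0)$. Consequently $\ker(\lambda_0-\A)$ is a nonzero, finite-dimensional, $C(\p)$-invariant subspace and $\sigma_{\operatorname{ess}}(\A)=\emptyset$. Moreover, $\mathcal F g:=Kg(0)$ stabilizes $\A+\mathcal{BF}$ whenever $(A(0),B(0))$ is stabilizable; for example $A\equiv B\equiv 1$ and $K=-2$ give $\A+\mathcal{BF}\cong -1$.

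So the statement needs extra structure on $X_n(\p)$, such as $X_n(\p)=C_n(\p)$. That is exactly what the paper's appeal to Proposition~\ref{prop:spec-mult-operator} tacitly assumes, and it is the version your proof correctly establishes.
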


\begin{proof}
	As the range of $\mathcal{F}$ is finite dimensional, the operators $\mathcal F$ and $\mathcal{B} \mathcal{F}$ are compact, cf. \cite[Ch.VI, \S3, Prop.~3.5~(b)]{conway_functional_2nd}.
	As the essential spectrum of a $\mathcal{A}$ is invariant under compact perturbations, cf.~\cite[Thm.~5.10]{schechter2002principles}, we conclude from Proposition~\ref{prop:spec-mult-operator} that
	$$
	\sigma (\mathcal{A}) = \sigma_{\operatorname{ess}}(\mathcal A) =
	\sigma_{\operatorname{ess}}(\mathcal{A+BF}). $$
	The assertion then follows from Proposition~\ref{prop:stability}.
\end{proof}

\begin{remark}
    We note that Theorem~\ref{thm:stab} is false for trivial reasons, when the assumption is dropped that $\p$ does not have isolated points. If $q\in \p$ is isolated, then it may happen that $A_{\vert\p \setminus \{q\}}$ defines an exponentially stable system and it suffices to stabilize the finite-dimensional pair $(A(q),B(q))$. This is easily possible without destroying stability properties away from $q$ by choosing $K(\theta) = 0$, $\theta \in \p\setminus \{q\}$.
\end{remark}

\subsection{Multiplication feedback}

Motivated by the disappointing result of Theorem~\ref{thm:stab}, we now intend to study a different class  of feedback operators. The second contribution of this paper is to show that feedbacks using multiplication operators can be used for stabilization. For ensembles defined in the space of continuous functions, we will study {\it multiplication feedback operators} given by multiplication operators, i.e. feedback operators of the form
\begin{equation*}
 \mathcal{K} :C_n(\p) \to C_m(\p),\quad ({\mathcal K} x )(\theta) = K(\theta)x(\theta),
\end{equation*}
where $K \in C_{m,n}(\p)$. This class of feedback operators was recently considered in \cite{laa_feedback}, where also the case $K(\theta)\equiv K \in \R^{m \times n}$ is considered for the controlled harmonic oscillator. In this section we will provide (Theorem~\ref{thm:feedback-hermite}) new sufficient conditions so that a continuous ensemble becomes uniformly ensemble reachable by applying a mixture of open-loop inputs and multiplication feedback operators of the form
$$u(t,x(t,\theta)) = u(t) + K(\theta) x(t,\theta), \qquad K \in C_{m,n}(\p).$$
We note that Theorem~\ref{thm:feedback-hermite} in this paper improves a recent result in \cite[Theorem~3]{laa_feedback}, as it does not require the application of transformations in the state-space and the input space.

A crucial step in the construction procedure is an ensemble version of the classical  Heymann Lemma. To this end, we recall some facts concerning Hermite indices. For a reachable pair $(A,B)\in \R^{n\times n} \times \R^{n \times m}$, we consider the list (a permutation of the columns the Kalman matrix of $(A,B)$)
\begin{align*}
	\begin{matrix}
		b_1 & Ab_1& \cdots & A^{n-1}b_1&\cdots& b_m & Ab_m& \cdots & A^{n-1}b_m.
	\end{matrix}
\end{align*}
Then, we select from left to right the first linearly independent columns
\begin{equation*}
	\label{eq:def:Hermiteindicesselection}
	b_1,\ldots,A^{h_1-1}b_1,b_2, \ldots, A^{h_2-1}b_2, \ldots ,b_m,\ldots,A^{h_m-1}b_m.
\end{equation*}
The corresponding exponents $h_1,...,h_m$ are called the \emph{Hermite indices} of $(A,B)$, where $h_i:=0$ if  $b_i$ is not selected, see also \cite[Scheme~II, Sec.~6.4.6]{kailath1980}. Note that the Hermite indices of a pair $(A,B)$ are not feedback invariant, in contrast to the Kronecker and the controllability indices, cf. e.g. \cite{Bar-Fer-Zab-2007-generic,hinrichsen2026mathematicalII}. In this context, we note that \cite[Thm.~2.1]{Bar-Fer-Zab-2007-generic} characterizes the possible Hermite indices that can be assigned through feedback in terms of the Kronecker indices of a system.  

The following lemma generalizes a frequently-used preparatory step in the proof of Heymann’s lemma to the ensemble case; see \cite{heymann1968comments} for the original statement. We need the following notation: For a given reachable pair $(A,B)\in \R^{n\times n} \times \R^{n \times m}$ with Hermite indices $h_1, ...,h_m$, and the convention $h_0:=0$, we  set
\begin{equation}
\label{eq:Hj-definition}
    H_j:= \sum_{i=0}^{j}   h_i, \quad j=0,\ldots, m.
\end{equation}

\begin{lemma}\label{lem:Heyman-Hermite}
	Let $\p$ be nonempty and compact. Let $(A,B) \in C_{n,n}(\p) \times C_{n,m}(\p)$ be pointwise reachable and suppose that the Hermite indices are constant. Then, there are $v_1,...,v_{n} \in \C^m$ and $x_1,\ldots,x_n \in C_n(\p)$ defined by setting $x_0(\theta):=0, \theta\in\p$, and 
	\begin{equation}
    \label{eq:xk-definition}
		x_{k}(\theta) := A(\theta)x_{k-1}(\theta) + B(\theta)v_{k}, \qquad \theta \in \p, k=1,\ldots,n,
	\end{equation}
    such that for all $\theta\in \p$ the vectors $x_1(\theta),...,x_n(\theta) \in \C^n$ 
	are linearly independent in  $\C^n$.
\end{lemma}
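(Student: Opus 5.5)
Since the Hermite indices $h_1,\dots,h_m$ do not depend on $\theta$, we can run the classical Heymann construction with a single schedule of input vectors $v_k$ chosen only from the standard basis $e_1,\dots,e_m$ of $\C^m$ (and zero). The choice is dictated by the integers $h_j$ and $H_j$ from \eqref{eq:Hj-definition}, so it is automatically independent of $\theta$. Concretely, for $k = H_{j-1}+1,\dots,H_j$ with $h_j>0$, set $v_{H_{j-1}+1} := e_j$ and $v_k := 0$ for $H_{j-1}+1<k\le H_j$. Indices $j$ with $h_j=0$ contribute no steps. Since $(A(\theta),B(\theta))$ is reachable, $H_m=n$ for every $\theta$, so this defines exactly $v_1,\dots,v_n$.

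The key step is to show by induction on $j$ that, for every $\theta\in\p$,
\[
\operatorname{span}\{x_1(\theta),\dots,x_{H_j}(\theta)\} = \operatorname{span}\{A(\theta)^{i}b_l(\theta) \mid 1\le l\le j,\ 0\le i< h_l\} =: V_j(\theta),
\]
and that the $H_j$ vectors on the left are linearly independent. Write $x_{H_{j-1}+s}(\theta) = A(\theta)^{s-1}b_j(\theta) + A(\theta)^{s}x_{H_{j-1}}(\theta)$ for $1\le s\le h_j$, which follows by unrolling \eqref{eq:xk-definition}. Modulo $V_{j-1}(\theta)$, the vector $x_{H_{j-1}+s}(\theta)$ equals $A(\theta)^{s-1}b_j(\theta)$ plus $A(\theta)^{s}x_{H_{j-1}}(\theta)$. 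The correction term lies in $A^{s}V_{j-1}$, which is where the argument needs care. By the definition of Hermite indices, $A^{h_l}b_l \in \operatorname{span}\{A^ib_{l'} : l'\le l, i<h_{l'}\}$. Hence $V_{j-1}(\theta)$ is $A(\theta)$-invariant: it is the reachable subspace of $(A(\theta),[b_1(\theta),\dots,b_{j-1}(\theta)])$. So the correction term lies in $V_{j-1}(\theta)$. Consequently the new vectors equal $b_j,\dots,A^{h_j-1}b_j$ modulo $V_{j-1}$. These are independent modulo $V_{j-1}$ precisely because they were selected in the Hermite scheme at this $\theta$. The induction closes, and at $j=m$ we get $n$ independent vectors.

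The main obstacle is exactly this invariance/selection fact. It requires that, at each $\theta$, the Hermite selection gives the chains $b_l,\dots,A^{h_l-1}b_l$ with the same lengths $h_l$. This is where the constancy hypothesis enters: otherwise the schedule of $e_j$'s would have to change with $\theta$ and continuity would break. I would carefully verify the invariance claim $A V_{j-1}\subseteq V_{j-1}$ from the left-to-right selection rule. If $A^{h_l}b_l$ were independent of earlier selected columns, it would itself have been selected, contradicting the definition of $h_l$. Continuity of the $x_k$ is immediate from \eqref{eq:xk-definition}, since $A,B$ are continuous and the $v_k$ are constant.
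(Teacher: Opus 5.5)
Your proposal is correct and takes the same approach as the paper: the same $\theta$-independent schedule ($v_{H_{l-1}+1}=e_l$ for $h_l>0$, zero otherwise) and the same block-by-block independence argument modulo the span of the previously selected chains. Where the paper treats only the second block explicitly and then says ``all steps are identical,'' your induction carries along the span identity and the $A(\theta)$-invariance of $V_{j-1}(\theta)$, which makes that general step rigorous.
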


\medskip

\begin{proof}
	Let $h_1,...,h_m$ denote the Hermite indices. Then,  for every $\theta\in\p$ the vectors
	\begin{align}\label{eq:Hermite-assumption}
		b_1(\theta),...,A(\theta)^{h_1-1}b_1(\theta),...,b_m(\theta),...,A(\theta)^{h_m-1}b_m(\theta)
	\end{align}
	are linearly independent in  $\C^n$.   Using the notation introduced in \eqref{eq:Hj-definition}, we set the vectors $v_1,...,v_n$ as
    \begin{equation*}
        v_k := \begin{cases}
		 e_{l} & \text{ if } k = H_{l-1} + 1 \text{ and } H_{l-1} < H_l ,\\
		 0 & \text{ else.}
		\end{cases}
    \end{equation*}
As the Hermite indices are independent of $\theta$ this causes no problems. Also, without loss of generality we assume that $b_1\neq 0$, otherwise all the indices in the following arguments are shifted to the first nonzero column. Then $H_0=0$, $H_1=h_1 >0$ and so $v_1 = e_1$, $v_2=\ldots =v_{h_1-1}=0$.
    By construction, we have
	\begin{align}\label{eq:construct-v}
		\begin{split}
			x_1&= Ax_0 + B v_1 = Be_1 = b_1 \\
			x_2&= Ax_1 + Bv_2= Ab_1\\
			& \,\,\,\vdots\\
			x_{h_1}&= A^{h_1-1}b_1.
		\end{split}
	\end{align}
    The list $(x_1,\ldots,x_{h_1})$ is linearly independent by the definition of the Hermite indices. For the next index, $h_1+1 = H_{1}+1$, consider the next nonzero Hermite index in the list $(h_1,\ldots,h_m)$, say $h_{l_2}$. This means $H_1 = H_2 = \ldots = H_{l_2-1}< H_{l_2}$ and $h_1+1 = H_{l_2-1}$. Hence $v_{h_1+1} = e_{l_2}$ and $v_{h_1+2} = \ldots = v_{H_{l_2}-1} = 0$. We thus obtain
		\begin{align}
	\begin{split}
			x_{h_1+1}&= A^{h_1}b_1 + Bv_{h_1+1} = A^{h_1}b_1 + b_{l_2} \\
		x_{h_1+2}&= A^{h_1+1}b_1 + Ab_{l_2} + Av_{h_1+2}= A^{h_1+1}b_1 + Ab_{l_2}\\
			& \,\,\,\vdots\\
		x_{h_1+h_{l_2}}&= A^{h_1+h_{l_2}-1}b_1 + A^{h_{l_2}-1}b_{l_2}.
		\end{split}
		\end{align}   
    As the vectors $(A^{h_1}b_1,\ldots,A^{h_1+h_{l_2}-1}b_1)$ are in the span of $(x_1,\dots,x_{h_1})$ by the construction of the Hermite indices, we obtain that $(x_1,\ldots,x_{H_{l_2}})$ are linearly independent.
    This procedure now repeats by choosing the next nonzero Hermite index. All steps are identical and the procedure terminates in a basis $(x_1,\ldots,x_n)$ of $\C^n$.
\end{proof}

\medskip
As the vectors $x_1,...,x_n$ are defined by the pair $(A,B)$ and its Hermite indices, we will use the notation
\begin{equation*}
	M_{A,B} :=\begin{pmatrix}    x_1&  x_2 & \cdots& x_n\end{pmatrix}.
\end{equation*}
For later use, we fix the following fact.

\begin{corollary}
	\label{cor:Heyman-Hermite-determinant}
	Let $\p$ be nonempty and compact. Let $(A,B)\in C_{n,n}(\p) \times C_{n,m}(\p)$ be pointwise reachable and suppose that the Hermite indices are constant. Then, for every $\theta\in \p$ the vectors $x_1(\theta),...,x_n(\theta)$ constructed in  Lemma~\ref{lem:Heyman-Hermite}  satisfy
	\begin{equation*}
		\det M_{A,B}(\theta) =
		\det \begin{pmatrix}
			x_1(\theta)& \cdots
			& x_n(\theta)
		\end{pmatrix}
		\neq 0.
		\end{equation*}
\end{corollary}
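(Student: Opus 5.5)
This corollary follows directly from Lemma~\ref{lem:Heyman-Hermite}; I would argue pointwise in $\theta$.

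Fix $\theta\in\p$. By Lemma~\ref{lem:Heyman-Hermite}, the vectors $x_1(\theta),\ldots,x_n(\theta)\in\C^n$ built recursively via \eqref{eq:xk-definition} are linearly independent. They are $n$ linearly independent vectors in the $n$-dimensional space $\C^n$, so the square matrix $M_{A,B}(\theta)=\begin{pmatrix} x_1(\theta)&\cdots&x_n(\theta)\end{pmatrix}$ has full rank. It is therefore invertible, and its determinant is nonzero. Since $\theta$ was arbitrary, the claim holds on all of $\p$.

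Two points deserve a remark.

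First, the vectors $v_1,\ldots,v_n$ in the lemma do not depend on $\theta$. This is exactly where constancy of the Hermite indices is used. Consequently each $x_k$ is a finite combination of products of the continuous matrices $A^j$ with $B$, applied to constant vectors, and so $x_k\in C_n(\p)$.

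Second, it follows that $\theta\mapsto\det M_{A,B}(\theta)$ is continuous on the compact set $\p$. Being nonvanishing, it is bounded away from zero, i.e.\ $\min_{\theta\in\p}|\det M_{A,B}(\theta)|>0$. This stronger form is what will presumably be needed later, to obtain $M_{A,B}^{-1}\in C_{n,n}(\p)$, for example when constructing a multiplication feedback $K$.

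There is no genuine obstacle here. The only care needed is to invoke the lemma at each fixed $\theta$ with the same, $\theta$-independent choice of the $v_k$.
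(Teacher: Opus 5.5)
Your argument is correct and matches the paper, which states this corollary without a separate proof as an immediate restatement of Lemma~\ref{lem:Heyman-Hermite}: $n$ linearly independent vectors in $\C^n$ form an invertible matrix. Your extra remark, that $M_{A,B}$ is continuous because the $v_k$ do not depend on $\theta$, is exactly what the paper uses afterwards, together with continuity of matrix inversion, to obtain $K\in C_{m,n}(\p)$ in Lemma~\ref{lem:Heymann-parameter}.
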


Now we are ready to prove the following parameter version of Heymann's Lemma.

\begin{lemma}[Parameter version of Heymann's Lemma]\label{lem:Heymann-parameter}
	Let $\p$ be nonempty and compact. Let $(A,B) \in C_{n,n}(\p) \times C_{n,m}(\p)$ be pointwise reachable and suppose that the Hermite indices are constant. Then, there are  $ K \in C_{m,n}(\p)$  and  $v \in \mathbb{C}^n$ such that the single-input pair
	\begin{align*}
		\left( A+B K, Bv\right) \in C_{n,n}(\p)\times C_n(\p)
	\end{align*}
	is pointwise reachable.
\end{lemma}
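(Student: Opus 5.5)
We follow the classical proof of Heymann's lemma, using Lemma~\ref{lem:Heyman-Hermite} and Corollary~\ref{cor:Heyman-Hermite-determinant} to keep all constructions continuous in $\theta$. Let $v_1,\dots,v_n\in\C^m$ and $x_1,\dots,x_n\in C_n(\p)$ be the vectors and functions provided by Lemma~\ref{lem:Heyman-Hermite}, and set $M_{A,B}=(x_1\ \cdots\ x_n)$. Corollary~\ref{cor:Heyman-Hermite-determinant} gives $\det M_{A,B}(\theta)\neq 0$ for every $\theta\in\p$. Since inversion is continuous on $GL_n(\C)$ and $\p$ is compact, $\theta\mapsto M_{A,B}(\theta)^{-1}$ is continuous. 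We choose $v:=v_1$, so that $Bv=x_1$ because $x_0=0$.

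Next we define the feedback by prescribing its action on the basis $x_1(\theta),\dots,x_n(\theta)$. We require
\[
K(\theta)x_k(\theta)=v_{k+1},\quad k=1,\dots,n-1,\qquad K(\theta)x_n(\theta)=0,
\]
where the last value could be any fixed vector in $\C^m$. In matrix form this reads
\[
K(\theta):=\begin{pmatrix} v_2 & v_3 & \cdots & v_n & 0\end{pmatrix} M_{A,B}(\theta)^{-1}.
\]
This $K$ is a product of a constant matrix and a continuous matrix function, so $K\in C_{m,n}(\p)$.

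Now we verify reachability pointwise. Fix $\theta\in\p$ and write $\tilde A=A(\theta)+B(\theta)K(\theta)$. For $k=1,\dots,n-1$, the recursion \eqref{eq:xk-definition} gives
\[
\tilde A x_k(\theta)=A(\theta)x_k(\theta)+B(\theta)v_{k+1}=x_{k+1}(\theta).
\]
By induction, $\tilde A^{k-1}B(\theta)v=x_k(\theta)$ for $k=1,\dots,n$. The Kalman matrix of $(\tilde A,B(\theta)v)$ is therefore exactly $M_{A,B}(\theta)$, which is invertible. Hence the single-input pair $(A(\theta)+B(\theta)K(\theta),B(\theta)v)$ is reachable for every $\theta\in\p$, which is the claim.

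The substantive difficulty lies entirely in Lemma~\ref{lem:Heyman-Hermite}. There, constancy of the Hermite indices is what allows the vectors $v_k$ to be chosen independently of $\theta$. Once that lemma is available, the only remaining points are the continuity of $M_{A,B}^{-1}$ and the bookkeeping needed to make sure the recursion indices match the definition of $K$.
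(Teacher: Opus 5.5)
Your proposal is correct and follows the paper's proof essentially verbatim. You take $v=v_1$ and $K(\theta)=\begin{pmatrix} v_2 & \cdots & v_n & v_{n+1}\end{pmatrix}M_{A,B}(\theta)^{-1}$, setting the free last column $v_{n+1}$ to $0$ where the paper allows an arbitrary continuous one, so that the Kalman matrix of $(A+BK,Bv_1)$ is exactly $M_{A,B}$, which is invertible by Corollary~\ref{cor:Heyman-Hermite-determinant}; you also rightly take $v\in\C^m$, which is what the statement's $v\in\C^n$ must mean.
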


\begin{proof}
	Fix $v_1,...,v_{n} \in \C^m$ and  $x_1,...,x_n\in C_n(\p)$  as in Lemma~\ref{lem:Heyman-Hermite} and an arbitrary $v_{n+1}\in C_n(\p)$. Then, we define the matrix-valued function
	\[
	K: \p \to \R^{m \times n}, \theta \mapsto  K(\theta) = \operatorname{col}(v_2,...,v_{n},v_{n+1}(\theta))\,\operatorname{col}(x_1(\theta),...,x_n(\theta))^{-1}.
	\]
    By Corollary~\ref{cor:Heyman-Hermite-determinant} and since taking the inverse is a continuous operation,  we have $ K \in C_{m,n}(\p)$. By construction, $ K(\theta) x_k(\theta) = v_{k+1}(\theta)$, $k=1,\ldots,n$, and with \eqref{eq:xk-definition} it holds 
	\begin{align*} 
    B (\theta)v_1 &= x_1(\theta), \\
	\left(A(\theta)+B(\theta) K(\theta) \right)x_{k}(\theta)&=  A(\theta)x_{k}(\theta)+B(\theta) K(\theta) x_{k}(\theta) =
	A(\theta)x_{k}(\theta)+B(\theta) v_{k+1}(\theta)\\ &=
	x_{k+1}(\theta), \quad k= 1, \ldots, n-1.
	\end{align*} 
	Therefore
	\[
	\left(A(\theta)+B(\theta) K(\theta) \right)^k B(\theta)v_1
       = x_{k+1}(\theta), \quad k=1,...,n-1.
	\]
	Thus, for every $\theta \in \p$, the Kalman matrix of the single-input system given by $\left( A(\theta) +B(\theta) K(\theta), B(\theta)v_1\right)$ is given by
	\begin{align*}
		\begin{pmatrix}
			B(\theta)v_1\!&\!\left(A(\theta)+B(\theta) K(\theta)\right) B(\theta)v_1 &\!\!\cdots\!\!& \left(A(\theta)+B(\theta) K(\theta)\right)^{n-1} B(\theta)v_1
		\end{pmatrix} 
		=M_{A,B}(\theta).
			\end{align*}
	From Corollary~\ref{cor:Heyman-Hermite-determinant} and the Kalman rank criterion, we conclude that $\left( A +B K, Bv_1\right)$ is pointwise reachable.
\end{proof}

With this version of Heymann's lemma at hand, we can prove the pole placement theorem in the next statement. Before doing so, we need to point out a significant difference between Heymann's original lemma and Lemma~\ref{lem:Heymann-parameter}: In the original lemma, any nonzero $b\in \im B$ can be used to obtain a reachable pair $(A+BK,b)$. We will discuss extensions in this direction in Section~\ref{sec:homotopy}.

\begin{theorem}
[Parameter-dependent pole placement]\label{lem:pole-placement-parameter}
	Let $\p$ be nonempty and compact. Let $(A,B) \in C_{n,n}(\p) \times C_{n,m}(\p)$ be pointwise reachable and suppose that the Hermite indices are constant. Then, for every tuple of continuous functions $\lambda_1, ..., \lambda_n \colon \p \to \C$ there is a  matrix function $ K \in C_{m,n}(\p)$ such that
	\begin{equation}
    \label{eq:poleplacement}
		\sigma ( \mathcal{ A+B K} ) = \bigcup_{\theta \in \p} \{\lambda_1(\theta),...,\lambda_n(\theta)\}.
	\end{equation}
\end{theorem}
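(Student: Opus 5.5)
The plan is to reduce to the single-input case via Lemma~\ref{lem:Heymann-parameter}, place poles there by a continuous Ackermann-type formula, and read off the spectrum with Proposition~\ref{prop:spec-mult-operator}(i).

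\emph{Reduction.} By Lemma~\ref{lem:Heymann-parameter} there are $K_1\in C_{m,n}(\p)$ and $v\in\C^m$ such that the single-input pair $(A_1,b):=(A+BK_1,Bv)$ is pointwise reachable. Here $A_1\in C_{n,n}(\p)$ and $b\in C_n(\p)$. Moreover, its Kalman matrix
\[
R(\theta)=\begin{pmatrix} b(\theta) & A_1(\theta)b(\theta) & \cdots & A_1(\theta)^{n-1}b(\theta)\end{pmatrix}
\]
equals $M_{A,B}(\theta)$. By Corollary~\ref{cor:Heyman-Hermite-determinant} it is therefore invertible for every $\theta$, and $\theta\mapsto R(\theta)^{-1}$ is continuous.

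\emph{Single-input pole placement.} Define the target polynomial
\[
p_\theta(s):=\prod_{i=1}^n (s-\lambda_i(\theta)) = s^n+a_{n-1}(\theta)s^{n-1}+\dots+a_0(\theta).
\]
Its coefficients are elementary symmetric functions of the $\lambda_i$, hence continuous in $\theta$. Note that no ordering or continuity of individual roots is needed beyond the given $\lambda_i$. Apply Ackermann's formula pointwise:
\[
k(\theta):=-e_n^\top R(\theta)^{-1}p_\theta(A_1(\theta)).
\]
This row vector depends continuously on $\theta$, since it is built from products, sums and the continuous inverse $R^{-1}$. The classical finite-dimensional Ackermann theorem, applied at each fixed $\theta$, gives that the characteristic polynomial of $A_1(\theta)+b(\theta)k(\theta)$ is $p_\theta$. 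Hence $\sigma(A_1(\theta)+b(\theta)k(\theta))=\{\lambda_1(\theta),\dots,\lambda_n(\theta)\}$.

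Alternatively, one can transform continuously to controller canonical form with the continuous transformation built from $R(\theta)$ and the characteristic coefficients of $A_1(\theta)$. I would mention this but use Ackermann's formula, which avoids the extra bookkeeping.

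\emph{Conclusion.} Set $K:=K_1+vk\in C_{m,n}(\p)$. Then $A+BK=A_1+bk$ pointwise. Proposition~\ref{prop:spec-mult-operator}(i) gives that the spectrum of the multiplication operator on $C_n(\p)$ is the union of the pointwise spectra, which is exactly \eqref{eq:poleplacement}.

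The only point needing care is the continuity of the feedback, and that rests entirely on the uniform invertibility of $R(\theta)$ supplied by the constant-Hermite-index construction. Everything else is pointwise classical theory.
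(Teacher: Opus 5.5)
Your proposal is correct and follows the paper's proof step for step. You reduce to a single-input pair via Lemma~\ref{lem:Heymann-parameter}, use a pointwise Ackermann feedback that is continuous because $p_\theta$ has continuous coefficients and the Kalman matrix has a continuous inverse, set $K=K_1+vk$, and read off the spectrum with Proposition~\ref{prop:spec-mult-operator}(i).

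Your minus sign in $k(\theta)=-e_n^\top R(\theta)^{-1}p_\theta(A_1(\theta))$ is the correct one for the closed loop $A_1+bk$; the paper's displayed formula omits it. Also, continuity of $R^{-1}$ already follows from pointwise reachability of $(A_1,b)$ and continuity of matrix inversion, so you do not need the identity $R=M_{A,B}$.
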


\begin{proof}
	Let  $\lambda_1, ..., \lambda_n \in C(\p)$. By Lemma~\ref{lem:Heymann-parameter}, there are a $\tilde K \in C_{n,m}(\p)$ and  a $v\in\C^m$ such that $(A+B\tilde K,Bv)$ is pointwise reachable. Consider the family of polynomials $\{ p_\theta \mid \theta \in \p\}$ given by
	\[
	p_\theta(z) := \big(z-\lambda_1(\theta)\big) \cdots \big(z-\lambda_n(\theta)\big)  = z^n-p_{n-1}(\theta) z^{n-1} + \cdots + p_1(\theta)z+p_0(\theta).
	\]
	Note that the coefficients $p_0,...,p_{n-1}$ depend continuously on the parameter $\theta$. With this at hand and using Ackermann's Formula, cf. \cite[Theorem~6.19]{fuhrmann2015mathematics}, we define
	\[
	\tilde k \in C_{1,n}(\p), \quad \tilde k(\theta) =
	\begin{pmatrix}0& \cdots & 0&1\end{pmatrix}\,
	T(\theta)^{-1} \, 
    p_\theta \big(A(\theta)+B(\theta)\tilde K(\theta)\big),\quad\theta\in\p,
	\]
	where $T(\theta)$ denotes the Kalman matrix corresponding to the single-input system $(A(\theta)+B(\theta)\tilde K(\theta),B(\theta)v) $, $\theta\in\p$. It then follows from Ackermann's formula that 
    \begin{equation*}
        \sigma (  A(\theta)+B(\theta) \tilde K(\theta) + B(\theta) v \tilde{k}(\theta) ) =
        \{ \lambda_1(\theta), \ldots, \lambda_n(\theta)\}, \quad \theta \in \p.
    \end{equation*}
    Defining $K\in C_{m,n}(\p)$ by
	\begin{equation*}
	 K(\theta):= \tilde K(\theta)+ v\tilde k(\theta), \quad \theta\in\p,   
	\end{equation*}
	  we obtain a continuous matrix function, for which
    \eqref{eq:poleplacement} holds by an application of Proposition~\ref{prop:spec-mult-operator}.
\end{proof}

\medskip

After finishing the preliminary steps, we are able to state the main results of this section. The first result says that the class of multiplication ensenmble feedback operators is suitable for exponential stabilization.

\medskip

\begin{theorem}\label{thm:feedback-exp-stab}
	Let $\p$ be nonempty and compact. Suppose $(A,B)\in C_{n,n}(\p) \times C_{n,m}(\p)$ is pointwise reachable and has constant Hermite indices.  Then, there is a multiplication ensemble feedback operator $\mathcal{K}:  C_{n}(\p) \to C_{m}(\p)$ such that $\mathcal{A+B K}$ is exponentially stable.
\end{theorem}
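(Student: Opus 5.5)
The plan is to combine the parameter-dependent pole placement result, Theorem~\ref{lem:pole-placement-parameter}, with the spectral characterization of stability, Proposition~\ref{prop:stability}. Since $(A,B)$ is pointwise reachable with constant Hermite indices, Theorem~\ref{lem:pole-placement-parameter} applies. We choose the constant, hence continuous, functions $\lambda_1(\theta)=\cdots=\lambda_n(\theta):=-1$ for all $\theta\in\p$. The theorem then yields $K\in C_{m,n}(\p)$ with
\begin{equation*}
\sigma(\mathcal{A}+\mathcal{B}\mathcal{K}) = \bigcup_{\theta\in\p}\{-1\} = \{-1\}.
\end{equation*}
Any other choice of continuous $\lambda_i$ with $\max_{\theta\in\p}\operatorname{Re}\lambda_i(\theta)<0$ would also work.

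Next we observe that $\mathcal{A}+\mathcal{B}\mathcal{K}$ is itself the multiplication operator on $C_n(\p)$ induced by $A+BK\in C_{n,n}(\p)$. Indeed, $(\mathcal{A}x+\mathcal{B}\mathcal{K}x)(\theta)=(A(\theta)+B(\theta)K(\theta))x(\theta)$. Its spectrum therefore lies in the open left half plane. Proposition~\ref{prop:stability}, applied with $X_n(\p)=C_n(\p)$ to the matrix function $A+BK$, shows that the generated semigroup $T(t)f(\theta)=e^{t(A(\theta)+B(\theta)K(\theta))}f(\theta)$ is exponentially stable. Finally, $\mathcal{K}$ is a multiplication feedback operator $C_n(\p)\to C_m(\p)$ as required, and it is bounded because $K$ is continuous on the compact set $\p$.

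There is essentially no obstacle, since all the work sits in Lemma~\ref{lem:Heymann-parameter} and Theorem~\ref{lem:pole-placement-parameter}. The one point to check is that the spectrum formula in Theorem~\ref{lem:pole-placement-parameter}, which relies on Proposition~\ref{prop:spec-mult-operator}(i), gives a compact set strictly inside the left half plane. This holds automatically because compactness of $\p$ and continuity of the $\lambda_i$ ensure the union of pointwise spectra is compact. If one wants an explicit decay rate, one can also argue directly: $\theta\mapsto A(\theta)+B(\theta)K(\theta)$ is continuous with spectra equal to $\{-1\}$, so a uniform bound $\sup_\theta\|e^{t(A+BK)(\theta)}\|\le Me^{-\gamma t}$ for any $\gamma<1$ follows by compactness. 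Proposition~\ref{prop:stability} already provides this, however.
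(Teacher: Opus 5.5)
Your proposal is correct and is essentially the paper's proof: you pick continuous target eigenvalue functions with values in the open left half plane, obtain $K\in C_{m,n}(\p)$ from Theorem~\ref{lem:pole-placement-parameter} (which uses Proposition~\ref{prop:spec-mult-operator}), and conclude exponential stability from Proposition~\ref{prop:stability}. Your constant choice $\lambda_i\equiv -1$ is simply a special case of the paper's arbitrary continuous choice.
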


\begin{proof}
	   Choose continuous functions $\lambda_1, ..., \lambda_n \colon \p \to \{ z \in \C \mid \operatorname{Re}z<0\}$. Using Proposition~\ref{prop:spec-mult-operator} and Theorem~\ref{lem:pole-placement-parameter}, we conclude that there is a  $K \in C_{n,m}(\p)$ such that
	$$\sigma( \mathcal{A+BK}) = \bigcup_{\theta \in \p} \{\lambda_1(\theta),...,\lambda_n(\theta)\} \subset \{ z \in \C \mid \operatorname{Re}z<0\}.
	$$
	The assertion then follows from Proposition~\ref{prop:stability}. 
\end{proof}

\begin{remark}
 We emphasize that for the latter statement it is not required that the parameter set $\p$ is perfect, because it is not important for the result whether or not the entire spectrum is essential. A key feature is that the multiplication feedback operator is not compact. Because of this, it can be applied to shift the entire spectrum to the open left half plane. 
\end{remark}

\medskip
Our second main result, Theorem~\ref{thm:feedback-hermite}, is concerned with sufficient conditions for the existence of multiplication feedback operators so that the mixed open-loop and feedback system becomes uniform ensemble reachability. We note that the sufficiency conditions are verifiable just in terms of the matrices $A(\theta)$ and $B(\theta)$. The proof of Theorem~\ref{thm:feedback-hermite} will use a modified version of sufficient conditions for uniform ensemble reachability of single-input systems. The details are provided in the Appendix~\ref{sec:appendix}.

\begin{remark}
In \cite[Thm~3, Thm.~4, Cor.~3]{JDE_ensembles_2021} it is shown that uniform ensemble reachability is a subtle interplay between spectral properties of the matrix function $A$, system theoretic properties of the pairs  $(A(\theta),B(\theta))$ and topological properties of the parameter space $\p$. The significance of the next result is that,  if the system-theoretic property that the Hermite indices are constant holds, the spectral properties can be put in place by using not just open-loop inputs but a mixture $u(t) + K(\theta)x(t,\theta)$ of open-loop inputs and feedback.  Therefore, the next statement is of particular interest for the cases where a family of systems is to be steered (approximately) to a desired family of states rather than just  stabilizing it.   
\end{remark}

\medskip
	
	\begin{theorem}\label{thm:feedback-hermite}
		Let $\p$ be nonempty, compact with empty interior and $\C \setminus \p$ is connected. Suppose $(A,B)\in C_{n,n}(\p) \times C_{n,m}(\p)$ is pointwise reachable and has constant Hermite indices.  Then, there is a  $K\in C_{m,n}(\p)$ such that $(A+B K,B)$ is uniformly ensemble reachable.
	\end{theorem}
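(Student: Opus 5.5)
The plan is to reduce to a single-input system with prescribed spectral structure, and then invoke a sufficient condition for uniform ensemble reachability of single-input systems that holds on sets with empty interior and connected complement. The appendix supplies such a condition; following \cite{JDE_ensembles_2021}, it is: pointwise reachability together with pairwise disjoint spectra $\sigma(A(\theta))\cap\sigma(A(\theta'))=\emptyset$ for $\theta\neq\theta'$ and simple eigenvalues. These are Mergelyan-type conditions. The feedback will be assembled from the pole placement construction.

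\emph{Step 1.} By Lemma~\ref{lem:Heymann-parameter}, choose $\tilde K\in C_{m,n}(\p)$ and $v\in\C^m$ such that $(A+B\tilde K,Bv)$ is pointwise reachable.

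\emph{Step 2.} Choose continuous eigenvalue functions $\lambda_j(\theta):=\theta+c_j$, $j=1,\dots,n$, with distinct constants $c_j$ chosen so that the sets $\p+c_j$ are pairwise disjoint. Since $\p$ is compact, taking the $c_j$ sufficiently far apart suffices. Each $\lambda_j$ is then injective, the eigenvalues at each fixed $\theta$ are simple, and the spectra at distinct parameters are disjoint. The union $\bigcup_j(\p+c_j)$ is a finite disjoint union of translates of $\p$. It therefore still has empty interior, and its complement is connected, because a finite disjoint union of compact sets with connected complements has connected complement.

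\emph{Step 3.} Apply Ackermann's formula exactly as in the proof of Theorem~\ref{lem:pole-placement-parameter} to obtain $\tilde k\in C_{1,n}(\p)$ with $\sigma\bigl(A+B\tilde K+Bv\tilde k\bigr)(\theta)=\{\lambda_1(\theta),\dots,\lambda_n(\theta)\}$. Set $K:=\tilde K+v\tilde k$. Reachability is feedback invariant pointwise, so $(A+BK,Bv)$ remains pointwise reachable.

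\emph{Step 4.} Now $(A+BK,Bv)$ is a pointwise reachable single-input system with continuous, injective, simple, pairwise disjoint eigenvalue functions. Using the continuous diagonalization result from the appendix, transform it into diagonal form $\operatorname{diag}(\lambda_j(\theta))$ with input vector $b(\theta)$ whose entries are nonvanishing; the nonvanishing follows from pointwise reachability. Uniform ensemble reachability then reduces to polynomial density: for each $j$, polynomials in $\lambda_j$ must be dense in $C(\p)$, and the entries must be approximable jointly. This follows from Mergelyan's theorem applied on the disjoint union $\bigcup_j\lambda_j(\p)$, which has empty interior and connected complement. Finally, uniform ensemble reachability of $(A+BK,Bv)$ implies that of $(A+BK,B)$, since the reachable set only grows when all input directions are available.

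The main obstacle is Step 4. Getting the appendix's characterization into exactly the applicable form requires handling the continuous diagonalizing transformation and the normalization of $b$; both are continuous and invertible on compact $\p$, so they preserve density in $C_n(\p)$. It also requires checking that the topological hypotheses transfer to the translated spectral set, which I expect to be routine given the choice of far-apart translates.
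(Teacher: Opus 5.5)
Your proposal is correct and follows essentially the same route as the paper: the parametrized Heymann lemma, Ackermann-based pole placement onto continuous injective eigenvalue functions with pairwise disjoint images, Proposition~\ref{prop:1} applied to the single-input pair $(A+BK,Bv)$, and then passing from $Bv$ to the full input matrix $B$. The differences are minor streamlinings. Your explicit choice $\lambda_j(\theta)=\theta+c_j$ makes the topological hypotheses on each $\lambda_j(\p)$ immediate. Applying Mergelyan's theorem once on the disjoint union $\bigcup_j\lambda_j(\p)$, whose complement is connected (e.g.\ by Janiszewski's theorem), replaces the paper's componentwise Mergelyan approximation combined with Runge approximation of indicator functions.
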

	
	\medskip

	\begin{proof}
		Choose continuous and injective functions $\lambda_1,...,\lambda_n: \p \to \C$ such that
		\[ \lambda_k(\p) \cap \lambda_l(\p) = \emptyset \qquad \text{for all } l \neq k.
		\]
		By Lemma~\ref{lem:Heymann-parameter}, Theorem~\ref{lem:pole-placement-parameter},  there is a $K(\cdot):= \tilde K(\cdot)+v\tilde k(\cdot)\in C_{n,m}(\p)$ such that the single-input pair
		$$
		\left( A(\theta)+B(\theta) K(\theta), B(\theta)v\right)
		$$
		is pointwise reachable and such that
        \begin{equation}
            \label{eq:lambdaforP51}
            \sigma(A(\theta)+B(\theta)K(\theta)) = \{\lambda_1(\theta),\ldots,\lambda_n(\theta)\}, \quad \theta \in \p.
        \end{equation}
        It remains to show that $(A+BK, B)$ is uniformly ensemble reachable.

		Since the vector $v$ does not depend on the parameter, it follows that $B(\theta) v \in \operatorname{Im} B(\theta)$. Hence, we have
		\begin{align*}
			\operatorname{span} &\left\{ \theta \mapsto \left(A(\theta) + B(\theta)\big(\tilde K(\theta)+v\tilde k(\theta)\big)\right)^k B(\theta)v \, \, |\, k=0,1,2,3,...\right\} \\
			&\subset
			\operatorname{span} \left\{ \theta \mapsto \left(A(\theta) + B(\theta) K(\theta)\right)^k b_j(\theta) \, \, |\,j=1,...m,\,  k=0,1,2,3,...\right\}.
		\end{align*}
		As \eqref{eq:lambdaforP51} is satisfied and the system $\left(A+B K  ,Bv\right) $ is pointwise reachable, we may apply Proposition~\ref{prop:1} in the appendix, and conclude that $\left(A+B K  ,Bv\right) $ is uniformly ensemble reachable, i.e.´
		\begin{align*}
			\operatorname{span} \left\{ \theta \mapsto \left(A(\theta) + B(\theta)\big(\tilde K(\theta)+v \tilde k(\theta)\big) \right)^k  B(\theta)v \, \, |\, k=0,1,2,3,...\right\}
		\end{align*}
		is dense in $C_n(\p)$ by \eqref{eq:Kalman-ensemblereach}. Consequently,
		\begin{align*}
			\operatorname{span} \left\{ \theta \mapsto \left(A(\theta) + B(\theta) K(\theta)\right)^k b_j(\theta) \, \, |\,j=1,...m,\,  k=0,1,2,3,...\right\},
		\end{align*}
		is also dense in $C_n(\p)$ and, by \eqref{eq:Kalman-ensemblereach}, $(A+BK,B)$ is uniformly ensemble reachable. This shows the assertion.
	\end{proof}

\begin{corollary}
Let $\p$ be nonempty, compact with empty interior and $\C \setminus \p$ be connected. Suppose $(A,B)\in C_{n,n}(\p) \times C_{n,m}(\p)$ is pointwise reachable and has constant Hermite indices. Then there is a $K\in C_{m,n}(\p)$ such that $A+B K$ is exponentially stable and $(A+B K,B)$ is uniformly ensemble reachable.
\end{corollary}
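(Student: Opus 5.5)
The plan is to rerun the proof of Theorem~\ref{thm:feedback-hermite}, with one extra constraint on the choice of eigenvalue functions. Stability will then come for free from Proposition~\ref{prop:stability}.

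First I choose continuous, injective functions $\lambda_1,\ldots,\lambda_n\colon \p\to\C$ that satisfy two conditions: the images $\lambda_k(\p)$ are pairwise disjoint, and every image lies in the open left half plane. A concrete choice is to fix distinct reals $c_1,\ldots,c_n$ with $c_k < -1-\max_{\theta\in\p}|\theta|$, spaced far enough apart, and set $\lambda_k(\theta):=\theta + c_k$. Each $\lambda_k$ is then continuous and injective. Its image is a translate of $\p$ lying in $\{\operatorname{Re} z<0\}$. If the $c_k$ differ pairwise by more than $2\max_{\theta\in\p}|\theta|$, the images are disjoint. If Proposition~\ref{prop:1} in the appendix also requires the images to have connected complements and empty interior, this holds automatically, since translates of $\p$ inherit both properties.

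Next I invoke Lemma~\ref{lem:Heymann-parameter} and Theorem~\ref{lem:pole-placement-parameter}, exactly as in the proof of Theorem~\ref{thm:feedback-hermite}. They give $K=\tilde K+v\tilde k\in C_{m,n}(\p)$ with three properties: $(A+BK,Bv)$ is pointwise reachable, $\sigma(A(\theta)+B(\theta)K(\theta))=\{\lambda_1(\theta),\ldots,\lambda_n(\theta)\}$ for every $\theta\in\p$, and
\[
\sigma(\mathcal{A}+\mathcal{B}\mathcal{K})=\bigcup_{\theta\in\p}\{\lambda_1(\theta),\ldots,\lambda_n(\theta)\}\subset\{z\in\C \mid \operatorname{Re}z<0\}
\]
by Proposition~\ref{prop:spec-mult-operator}(i). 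Proposition~\ref{prop:stability} then yields exponential stability of $\mathcal{A}+\mathcal{B}\mathcal{K}$.

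For uniform ensemble reachability, the argument in the proof of Theorem~\ref{thm:feedback-hermite} carries over verbatim. Proposition~\ref{prop:1} applies to the single-input pair $(A+BK,Bv)$ and makes its Kalman span dense in $C_n(\p)$. That span is contained in the span of $\{(A+BK)^k b_j\}$, so by \eqref{eq:Kalman-ensemblereach} the pair $(A+BK,B)$ is uniformly ensemble reachable.

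The only delicate point is checking that the extra half-plane condition does not conflict with the hypotheses of Proposition~\ref{prop:1}. Theorem~\ref{thm:feedback-hermite} allows any continuous, injective, pairwise-disjoint eigenvalue functions, so restricting their images to the left half plane should cost nothing. I expect the explicit translation construction to satisfy all requirements. If Proposition~\ref{prop:1} imposes a further condition, for instance that $\C\setminus\bigcup_k\lambda_k(\p)$ be connected, I would secure it by taking the $c_k$ far apart. A finite union of disjoint compact sets, each with connected complement, then still has connected complement.
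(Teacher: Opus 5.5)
Your proposal is correct and is the paper's argument: rerun the proof of Theorem~\ref{thm:feedback-hermite} with continuous, injective eigenvalue functions whose images are pairwise disjoint and lie in the open left half plane, then obtain exponential stability from Proposition~\ref{prop:spec-mult-operator}(i) and Proposition~\ref{prop:stability}. Your explicit translates $\lambda_k(\theta)=\theta+c_k$ are a valid concrete instance of the choice the paper leaves implicit, and the connectedness of $\C\setminus\bigcup_k\lambda_k(\p)$ you worry about holds automatically once the images are disjoint compact sets with connected complements.
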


	\begin{proof}
        It suffices to choose the $\lambda_i : \p \to \C$ in the proof of Theorem~\ref{thm:feedback-hermite} such that they are continuous and injective functions whose images are in the open left-half plane and pairwise disjoint. 
	\end{proof}

\medskip
To end this section, we comment on the relation of Theorem~\ref{thm:feedback-hermite} to \cite[Theorem~3]{laa_feedback}. Apart from pointwise reachability, that is supposed in both results, the two theorems make an assumption on structure indices of the matrix pair $(A,B)$. 

More precisely, Theorem~\ref{thm:feedback-hermite} supposes that the Hermite indices are constant, whereas \cite[Theorem~3]{laa_feedback} assumes that the Kronecker indices are constant.

A central difference between the two indices is that the Kronecker indices are invariant under feedback, cf. \cite[Lemma~6.16]{fuhrmann2015mathematics}, whereas the Hermite indices are not invariant under feedback, cf. \cite{Bar-Fer-Zab-2007-generic}, \cite[Chapter 6.6, Exercise 23]{hinrichsen2026mathematicalII}.

Regarding the conclusions of the theorems, we note that Theorem~\ref{thm:feedback-hermite} provides the existence of a continuous feedback matrix $K$ such that $(A+BK,B$ is uniformly ensemble reachable. In contrast, \cite[Theorem~3]{laa_feedback} shows that $(A,B)$ is feedback equivalent to an uniformly ensemble reachable pair. More precisely, \cite[Theorem~3]{laa_feedback} proves that there are continuously invertible matrices $T \in C_{n,n}(\p)$ and $S \in C_{m,m} (\p)$ and a feedback matrix $K \in C_{n,m}(\p)$ such that the pair
\[
\left( T(A-BS^{-1}K)T^{-1}, TBS^{-1}\right)
\]
is uniformly ensemble reachable.

\section{Intervals and Circles as Parameter Spaces}
\label{sec:homotopy}

In this section we depart from the assumption of constant Hermite indices. In this case the constructions become less concrete, and we require further assumptions on the parameter space $\p$, but on the the other hand more general statements of Heymann's lemma can be obtained. We start with a general observation.

Let $\p$ be a compact parameter space and let $v\in\C^m$ be such that
\[
B(\theta)v \neq 0, \qquad \theta\in \p.
\]
For each $\theta\in \p$, consider the set
\begin{equation}
\label{eq:Etheta}    
E_\theta
:=
\Bigl\{
F\in\C^{m\times n}
:
(A(\theta)+B(\theta)F,\; B(\theta)v)
\text{ is reachable}
\Bigr\}.
\end{equation}
If we assume that $(A,B)$ is pointwise reachable, then, by Heymann's lemma, $E_\theta$ is nonempty for every $\theta\in P$. More precisely, if we consider the determinant of the Kalman matrix
\[
\Phi(\theta,F)
:=
\det\!\bigl[
B(\theta)v,\,
(A(\theta)+B(\theta)F)B(\theta)v,\,
\dots,\,
(A(\theta)+B(\theta)F)^{n-1}B(\theta)v
\bigr],
\]
then $E_\theta=\{F:\Phi(\theta,F)\neq 0\}$, hence $E_\theta$ is the complement of a proper algebraic subset of $\C^{m\times n}$. In particular, $E_\theta$ is dense, open and path connected. We define the \emph{Heymann bundle}
\begin{equation}
\label{eq:Heymannbundle}
E
:=
\bigl\{
(\theta,F)\in \p\times\C^{m\times n}
:
F\in E_\theta
\bigr\}.
\end{equation}
Since $\Phi$ is continuous, $E$ is an open subset of the trivial bundle
$\p\times\C^{m\times n}$, and the projection
\[
\pi:E\to \p,\qquad (\theta,F)\mapsto \theta,
\]
is continuous.

The problem of constructing a parameter-dependent feedback may now be formulated as a section problem: does there exist a continuous map
\[
s:\p\to E
\]
such that $\pi\circ s=\operatorname{id}_\p$? Writing $s(\theta)=(\theta,K(\theta))$, this is equivalent to finding a continuous map
\[
K:\p\to\C^{m\times n}
\]
for which
\[
(A(\theta)+B(\theta)K(\theta),\,B(\theta)v)
\]
is reachable for every $\theta\in \p$.

The existence of such a section is naturally related to the topology of the parameter space $\p$ and touches questions of homotopy theory and obstruction theory. A detailed analysis of this context is beyond the scope of this paper. However, we can present a simple case in which $\p$ is an interval, circle or homeomorphic image thereof.

With this we obtain a further parameter dependent version of Heymann's lemma.

\begin{lemma}[Parameter-dependent Heymann lemma]
\label{lem:Heymann-generic}
	Let $\p$ be a compact interval or a circle, or a homeomorphic image thereof. Let $(A,B) \in C_{n,n}(\p) \times C_{n,m}(\p)$ be pointwise reachable. Then, for every $v \in \C^{m}$ with the property that $B(\theta)v\neq 0$ for all $\theta \in \p$,
    there exists $ K \in C_{m,n}(\p)$  such that the single-input pair
	\begin{align*}
		\left( A+B K, Bv\right) \in C_{n,n}(\p)\times C_n(\p)
	\end{align*}
	is pointwise reachable.   
\end{lemma}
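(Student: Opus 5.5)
We view the problem as a section problem for the Heymann bundle $E$ in \eqref{eq:Heymannbundle}, as set up above. Fix $v$ with $B(\theta)v\neq 0$ on $\p$. By Heymann's lemma applied pointwise (any nonzero $b\in\im B(\theta)$ works), each fibre $E_\theta$ is nonempty. Since $E_\theta=\{F:\Phi(\theta,F)\neq0\}$ is the complement of a proper algebraic hypersurface in $\C^{m\times n}$, it is open, dense and path connected. We will build a continuous section $\theta\mapsto K(\theta)$ by a local-to-global patching argument. Path connectedness of the fibres supplies the gluing, and the one-dimensional topology of $\p$ ensures that only finitely many "zero-dimensional" gluing problems arise. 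It suffices to treat $\p=[0,1]$ and $\p=S^1$; the homeomorphic images follow by composing with the homeomorphism.

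\emph{Step 1 (local sections).} For each $\theta_0\in\p$ pick $F_0\in E_{\theta_0}$. Since $\Phi$ is continuous in $(\theta,F)$ and $\Phi(\theta_0,F_0)\neq0$, there is a relatively open neighbourhood $U_{\theta_0}$ with $\Phi(\theta,F_0)\neq0$ for all $\theta\in U_{\theta_0}$. Thus the constant map $F_0$ is a section over $U_{\theta_0}$. By compactness and a Lebesgue number argument, we obtain a partition $0=t_0<t_1<\dots<t_N=1$ (for the circle, parametrize by $[0,1]$ with $0\sim1$). On each $I_j=[t_{j-1},t_j]$ there is a constant $F_j\in\C^{m\times n}$ with $F_j\in E_\theta$ for all $\theta\in I_j$. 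We also arrange that each $F_j$ works on a slightly larger open interval $J_j\supset I_j$.

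\emph{Step 2 (gluing at the nodes).} At each interior node $t_j$, both $F_j$ and $F_{j+1}$ lie in $E_{t_j}$. Because $E_{t_j}$ is path connected (the complement of a complex hypersurface in a complex vector space), there is a continuous path $\gamma_j:[0,1]\to E_{t_j}$ from $F_j$ to $F_{j+1}$. The image of $\gamma_j$ is compact and $E$ is open, so there is $\delta_j>0$ with $\Phi(\theta,\gamma_j(s))\neq0$ for all $|\theta-t_j|\le\delta_j$ and $s\in[0,1]$. This is a tube-lemma argument, using continuity of $\Phi$ and compactness of $[0,1]$.

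Now define $K$: set $K=F_j$ on $I_j$ away from the $\delta$-neighbourhoods of the nodes. On $[t_j-\delta_j,t_j+\delta_j]$, set $K(\theta)=\gamma_j\bigl((\theta-t_j+\delta_j)/(2\delta_j)\bigr)$. We shrink $\delta_j$ so that these windows are disjoint and lie in $J_j\cap J_{j+1}$. The resulting $K$ is continuous and satisfies $\Phi(\theta,K(\theta))\neq0$ for every $\theta$.

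\emph{Step 3 (the circle).} For $\p=S^1$, the same construction applies, with one extra gluing at the identified point $0\sim1$ between $F_N$ and $F_1$. This uses path connectedness of $E_{t_0}$ again. No monodromy obstruction arises, because we glue constant local sections along paths within a single fibre rather than lifting loops.

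The main obstacle is Step 2, specifically the uniform thickening of the connecting path $\gamma_j$ into a neighbourhood of $t_j$. We handle it by compactness of the path image together with openness of $E$. We also rely on the fact that a proper algebraic subset of $\C^{m\times n}$ has real codimension $2$, so its complement is connected; this is where working over $\C$ is essential.
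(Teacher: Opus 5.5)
Your proof is correct and follows essentially the same route as the paper. Both arguments use constant local feedbacks on a finite cover (you take a Lebesgue-number partition where the paper orders an open subcover so that only consecutive sets overlap). These are glued by paths in the path-connected fibres $E_\eta$, thickened to a neighbourhood of the gluing point via continuity of $\Phi$ and compactness, and your explicit extra gluing at $0\sim 1$ spells out the circle case that the paper only indicates.
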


\begin{proof}
    We will construct a suitable finite open cover of intervals with associated constant feedback values for each open set in the cover. On the intersections these values will be patched. Without loss of generality we let $\p=[0,1]$. The case of a circle is treated in the same way as will be described.
    Concretely, using the notation introduced in \eqref{eq:Etheta} and \eqref{eq:Heymannbundle}:

For every $\theta \in \p$ choose $F_\theta\in E_\theta$. As $\Phi$ varies continuously in $\theta$, there is a (relatively) open interval $U_\theta$ around $\theta$ such that $F_\theta\in E_\eta$ for all $\eta\in U_\theta$. Choose a finite open subcover $\{U_1,\ldots,U_k\}$ of this open cover of $\p$ and denote the associated feedback values by $F_1,\ldots,F_k$. As $\p=[0,1]$, we may assume that the $U_i$ are ordered in such a way, that $U_i\cap U_{i+1} \neq \emptyset$, $i=1,\ldots,k-1$ and by reducing the size of $U_i$ if necessary, we may assume that $U_i\cap U_j = \emptyset$ if $|i-j|\geq 2$. For an index $i$, fix $\eta\in U_i\cap U_{i+1}$. Then $F_i,F_{i+1} \in E_{\eta}$ and as each fibre is connected there exists a continuous path $\gamma:[0,1]\to E_{\eta}$ with $\gamma(0)=F_i$, $\gamma(1)=F_{i+1}$. By continuity of $\Phi$, there exists an $\varepsilon >0$ such that $[\eta-\varepsilon,\eta+\varepsilon]\subset U_i\cap U_{i+1}$ and $\gamma([0,1])\subset E_\vartheta$ for all $\vartheta \in [\eta-\varepsilon,\eta+\varepsilon]$.
Now define a continuous function $K:U_i\cup U_{i+1}\to \C^{m\times n}$ by setting
\begin{equation}
    \begin{aligned}
        K(\theta) = \left\{ \begin{matrix}
            F_i &\quad &\theta \in (\inf U_i,\eta-\varepsilon), \\
            \gamma\left( \frac{1}{2\varepsilon}\theta - \frac{\eta-\varepsilon}{2\varepsilon} \right) & &\theta \in [\eta-\varepsilon,\eta+\varepsilon],\\
            F_{i+1} && \theta \in (\eta +\varepsilon, \sup U_{i+1}).
        \end{matrix}\right.
    \end{aligned}
\end{equation}
By construction $K$ is continuous and $K(\theta) \in E_\theta$ for all $\theta \in U_i\cup U_{i+1}$. It is clear that this construction may be performed on each of the finitely many overlaps $U_i\cap U_{i+1}$. In this way the desired continuous $K$ is constructed. 
\end{proof}

\begin{remark}
    In comparison the two different versions of Heymann's lemma, that is, Lemma~\ref{lem:Heyman-Hermite} and \ref{lem:Heymann-generic}, have different strengths and weaknesses. While Lemma~\ref{lem:Heyman-Hermite} asserts the existence of a vector $v$ for which the reduction of input dimension can be performed, the construction of the appropriate feedback only works for this particular vector and does not readily extend to other potential candidates $v$ with $B(\theta)v\neq 0$ everywhere. On the other hand Lemma~\ref{lem:Heymann-generic} shows the existence of a suitable feedback for every $v$ with nonvanishing image under $B(\cdot)$ but it is not shown that such a $v$ exists. Note, in addition, that the proof of Lemma~\ref{lem:Heymann-generic} also works for $v\in C_m(\p)$ but we have refrained from assuming this as for the application in the proof of Theorem~\ref{thm:feedback-hermite2} a constant $v$ is required.
\end{remark}

The following results can now be proved just as Theorems~\ref{lem:pole-placement-parameter}, \ref{thm:feedback-exp-stab}, \ref{thm:feedback-hermite} before.

\begin{theorem}
[Parameter-dependent pole placement]\label{lem:pole-placement-parameter2}
	Let $\p$ be a compact interval or a circle. Let $(A,B) \in C_{n,n}(\p) \times C_{n,m}(\p)$ be pointwise reachable and assume that there exists a $v\in \C^m$ with $B(\theta)v\neq0$ for all $\theta\in\p$. Then, for every tuple of continuous functions $\lambda_1, ..., \lambda_n \colon \p \to \C$ there is a  matrix function $ K \in C_{n,m}(\p)$ such that
	\begin{equation}
    \label{eq:poleplacement2}
		\sigma ( \mathcal{ A+B K} ) = \bigcup_{\theta \in \p} \{\lambda_1(\theta),...,\lambda_n(\theta)\}.
	\end{equation}
\end{theorem}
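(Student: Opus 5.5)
The argument follows the proof of Theorem~\ref{lem:pole-placement-parameter}, with Lemma~\ref{lem:Heymann-parameter} replaced by Lemma~\ref{lem:Heymann-generic}. The hypotheses of the present statement are exactly those of Lemma~\ref{lem:Heymann-generic}: $\p$ is an interval or a circle, $(A,B)$ is pointwise reachable, and there is a constant $v\in\C^m$ with $B(\theta)v\neq 0$ on $\p$. The lemma therefore gives $\tilde K\in C_{m,n}(\p)$ such that the single-input pair $(A+B\tilde K, Bv)$ is pointwise reachable.

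Next, given continuous $\lambda_1,\dots,\lambda_n$, form $p_\theta(z)=\prod_{i}(z-\lambda_i(\theta))$. Its coefficients are elementary symmetric polynomials in the $\lambda_i$ and hence continuous in $\theta$. Let $T(\theta)$ be the Kalman matrix of $(A(\theta)+B(\theta)\tilde K(\theta),B(\theta)v)$. Its entries are polynomial in the entries of $A,B,\tilde K$, so $T$ is continuous. By pointwise reachability $\det T(\theta)\neq 0$ for every $\theta$, so $\theta\mapsto T(\theta)^{-1}$ is continuous on the compact set $\p$. Ackermann's formula then gives
\[
\tilde k(\theta)=\begin{pmatrix}0&\cdots&0&1\end{pmatrix}T(\theta)^{-1}p_\theta\bigl(A(\theta)+B(\theta)\tilde K(\theta)\bigr),
\]
a continuous row-vector function. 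For each $\theta$, the characteristic polynomial of $A(\theta)+B(\theta)\tilde K(\theta)+B(\theta)v\tilde k(\theta)$ equals $p_\theta$, so $\sigma\bigl(A(\theta)+B(\theta)K(\theta)\bigr)=\{\lambda_1(\theta),\dots,\lambda_n(\theta)\}$ with $K:=\tilde K+v\tilde k$, which is continuous.

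Finally, Proposition~\ref{prop:spec-mult-operator}(i) gives $\sigma(\mathcal{A+BK})=\bigcup_{\theta\in\p}\sigma(A(\theta)+B(\theta)K(\theta))$, which yields \eqref{eq:poleplacement2}. (The statement writes $K\in C_{n,m}(\p)$, but the proof produces $K\in C_{m,n}(\p)$; this is a typo in the statement.)

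No step is a real obstacle. The topological work, patching local feedbacks across overlaps using the connectedness of the fibres $E_\theta$, is already done in Lemma~\ref{lem:Heymann-generic}. What remains is to confirm that every object in Ackermann's construction depends continuously on $\theta$, and the only point needing care is the continuity of $T(\theta)^{-1}$, which holds because the determinant of $T$ never vanishes on $\p$.
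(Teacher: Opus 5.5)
Your proposal is correct and follows exactly the route the paper indicates: rerun the proof of Theorem~\ref{lem:pole-placement-parameter} with Lemma~\ref{lem:Heymann-generic} in place of Lemma~\ref{lem:Heymann-parameter}, and you are right that $K$ should lie in $C_{m,n}(\p)$. One small slip, shared with the paper: with $\tilde k(\theta)=\begin{pmatrix}0&\cdots&0&1\end{pmatrix}T(\theta)^{-1}p_\theta\bigl(A(\theta)+B(\theta)\tilde K(\theta)\bigr)$, Ackermann's formula makes $p_\theta$ the characteristic polynomial of $A+B\tilde K-Bv\tilde k$, not of $A+B\tilde K+Bv\tilde k$ (for $n=1$ the plus sign gives the eigenvalue $2(A+B\tilde K)-\lambda_1$), so you should take $K:=\tilde K-v\tilde k$ instead.
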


\begin{theorem}\label{thm:feedback-exp-stab2}
	Let $\p$ be a compact interval or a circle. Suppose that the pair $(A,B)\in C_{n,n}(\p) \times C_{n,m}(\p)$ is pointwise reachable and that there exists a $v\in \C^m$ with $B(\theta)v\neq0$ for all $\theta\in\p$.  Then, there is a multiplication ensemble feedback operator $\mathcal{K}:  C_{n}(\p) \to C_{m}(\p)$ such that $\mathcal{A+B K}$ is exponentially stable.
\end{theorem}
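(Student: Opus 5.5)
The plan is to copy the proof of Theorem~\ref{thm:feedback-exp-stab}, with Theorem~\ref{lem:pole-placement-parameter2} in place of Theorem~\ref{lem:pole-placement-parameter}. Three steps are needed: fix stable target eigenvalues, build a continuous feedback with these closed-loop eigenvalues at every parameter, and read off exponential stability from the spectrum of the closed-loop multiplication operator.

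\emph{Step 1 (targets).} We take continuous functions $\lambda_1,\ldots,\lambda_n\colon\p\to\{z\in\C\mid \operatorname{Re}z<0\}$. The constant choice $\lambda_k(\theta)\equiv -k$ already works.

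\emph{Step 2 (pole placement).} Theorem~\ref{lem:pole-placement-parameter2} applies, since $(A,B)$ is pointwise reachable and there is a $v\in\C^m$ with $B(\theta)v\neq0$ on $\p$. For completeness, that theorem is proved in two stages.
\begin{enumerate}
\item[(a)] Lemma~\ref{lem:Heymann-generic} gives $\tilde K\in C_{m,n}(\p)$ such that $(A+B\tilde K,Bv)$ is pointwise reachable.
\item[(b)] Ackermann's formula gives the single-input gain
\[
\tilde k(\theta)=\begin{pmatrix}0&\cdots&0&1\end{pmatrix}T(\theta)^{-1}p_\theta\bigl(A(\theta)+B(\theta)\tilde K(\theta)\bigr).
\]
Here $T(\theta)$ is the Kalman matrix of $(A+B\tilde K,Bv)$ at $\theta$. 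It is invertible for every $\theta$ and depends continuously on $\theta$, so $\tilde k$ is continuous. The coefficients of $p_\theta(z)=\prod_k(z-\lambda_k(\theta))$ are also continuous in $\theta$.
\end{enumerate}
Setting $K:=\tilde K+v\tilde k\in C_{m,n}(\p)$ then gives $\sigma(A(\theta)+B(\theta)K(\theta))=\{\lambda_1(\theta),\ldots,\lambda_n(\theta)\}$ for every $\theta\in\p$.

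\emph{Step 3 (stability).} The closed-loop generator $\mathcal{A}+\mathcal{B}\mathcal{K}$ is the multiplication operator on $C_n(\p)$ induced by the continuous function $A+BK$. Proposition~\ref{prop:spec-mult-operator}(i) therefore gives
\[
\sigma(\mathcal{A+BK})=\bigcup_{\theta\in\p}\{\lambda_1(\theta),\ldots,\lambda_n(\theta)\}.
\]
This set is the union of the images of finitely many continuous functions on the compact set $\p$, so it is compact. Since it lies in the open left half plane, it in fact lies in $\{\operatorname{Re}z\le-\gamma\}$ for some $\gamma>0$, although only containment in the open half plane is needed. The implication (3)$\Rightarrow$(2) of Proposition~\ref{prop:stability} then yields exponential stability of the semigroup generated by $\mathcal{A+BK}$.

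The only step needing care is Step 2(a), the existence of a \emph{continuous} global feedback $\tilde K$ making the single-input pair pointwise reachable. Here the topology of $\p$ (interval or circle) enters through Lemma~\ref{lem:Heymann-generic}, which we may take as given. The remaining steps are routine continuity checks.
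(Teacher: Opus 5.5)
Your proposal is correct and follows the paper's own route: the paper proves this theorem ``just as'' Theorem~\ref{thm:feedback-exp-stab}, by choosing stable targets $\lambda_k$, invoking Theorem~\ref{lem:pole-placement-parameter2} (itself Lemma~\ref{lem:Heymann-generic} plus Ackermann's formula), and concluding with Propositions~\ref{prop:spec-mult-operator}(i) and~\ref{prop:stability}. One small slip in your optional recap of Step~2(b), which the paper's displayed formula shares: for the closed loop $A+B\tilde K+Bv\tilde k$, Ackermann's gain needs a minus sign, $\tilde k(\theta)=-\begin{pmatrix}0&\cdots&0&1\end{pmatrix}T(\theta)^{-1}p_\theta\bigl(A(\theta)+B(\theta)\tilde K(\theta)\bigr)$ (check $n=1$: without the sign the closed-loop eigenvalue is $2(A+B\tilde K)-\lambda$ rather than $\lambda$), but this is harmless since you can simply cite Theorem~\ref{lem:pole-placement-parameter2}.
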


\begin{theorem}\label{thm:feedback-hermite2}
		Let $\p$ be a compact interval. Suppose $(A,B)\in C_{n,n}(\p) \times C_{n,m}(\p)$ is pointwise reachable and that there exists a $v\in \C^m$ with $B(\theta)v\neq0$ for all $\theta\in\p$.  Then there exists a  $K\in C_{m,n}(\p)$ such that $(A+B K,B)$ is uniformly ensemble reachable.
	\end{theorem}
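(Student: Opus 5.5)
The argument follows the proof of Theorem~\ref{thm:feedback-hermite}, with Lemma~\ref{lem:Heymann-generic} taking the place of Lemma~\ref{lem:Heymann-parameter}. First we check the topological hypotheses of Proposition~\ref{prop:1} in the appendix. A compact interval $\p\subset\R\subset\C$ (or a homeomorphic arc) has empty interior in $\C$, and its complement $\C\setminus\p$ is connected. So the setting of Theorem~\ref{thm:feedback-hermite} is available. Now fix $v\in\C^m$ with $B(\theta)v\neq 0$ for all $\theta\in\p$. Lemma~\ref{lem:Heymann-generic} gives $\tilde K\in C_{m,n}(\p)$ such that the single-input pair $(A+B\tilde K,Bv)$ is pointwise reachable.

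Next we place the poles. We choose continuous injective functions $\lambda_1,\dots,\lambda_n:\p\to\C$ whose images $\lambda_k(\p)$ are pairwise disjoint; for instance $\lambda_k(\theta)=\theta+ik$ on $\p=[0,1]$. As in the proof of Theorem~\ref{lem:pole-placement-parameter}, we form $p_\theta(z)=\prod_k(z-\lambda_k(\theta))$, whose coefficients depend continuously on $\theta$. Ackermann's formula then gives
\[
\tilde k(\theta)=\begin{pmatrix}0&\cdots&0&1\end{pmatrix}T(\theta)^{-1}p_\theta\bigl(A(\theta)+B(\theta)\tilde K(\theta)\bigr),
\]
where $T(\theta)$ is the Kalman matrix of $(A+B\tilde K,Bv)$ at $\theta$. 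Pointwise reachability makes $T(\theta)$ invertible for every $\theta$, so $\tilde k$ is continuous. We set $K:=\tilde K+v\tilde k\in C_{m,n}(\p)$. Then $\sigma(A(\theta)+B(\theta)K(\theta))=\{\lambda_1(\theta),\dots,\lambda_n(\theta)\}$ for all $\theta\in\p$. Moreover, $(A+BK,Bv)$ is still pointwise reachable, because a rank-one state feedback through the input column $Bv$ preserves reachability of a single-input pair.

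Finally we obtain reachability of $(A+BK,B)$. The single-input pair $(A+BK,Bv)$ is pointwise reachable, and its spectra are simple and pairwise disjoint, given by the continuous injective eigenvalue functions $\lambda_k$. These are exactly the hypotheses of Proposition~\ref{prop:1} on a parameter set with empty interior and connected complement. Hence $(A+BK,Bv)$ is uniformly ensemble reachable, i.e. the span of $\{(A+BK)^kBv\}$ is dense in $C_n(\p)$. Since $v$ is constant, $Bv=\sum_j v_jb_j$, so this span lies in the span of $\{(A+BK)^kb_j : j=1,\dots,m,\ k\ge 0\}$. The latter is therefore dense as well, and \eqref{eq:Kalman-ensemblereach} shows that $(A+BK,B)$ is uniformly ensemble reachable.

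The main point requiring care is applying Proposition~\ref{prop:1}. One has to confirm that its hypotheses are met exactly: continuous injective eigenvalue functions with disjoint images, pointwise reachability, and the Mergelyan-type conditions on $\p$. It may also need the continuous diagonalization from the appendix, which is available here because the spectra are simple and disjoint. A further point is that the vector $v$ must be constant, not merely an element of $C_m(\p)$. Otherwise $B(\theta)v(\theta)$ would not lie in the span generated by the columns $b_j$ with constant coefficients, and the inclusion of spans in the last step would fail. This is why the theorem assumes a constant $v$.
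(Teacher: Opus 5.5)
Your proposal is correct and follows the route the paper indicates: rerun the proof of Theorem~\ref{thm:feedback-hermite} with Lemma~\ref{lem:Heymann-generic} supplying $\tilde K$ in place of Lemma~\ref{lem:Heymann-parameter}, then do Ackermann pole placement with injective eigenvalue functions having disjoint images, apply Proposition~\ref{prop:1}, and use the span inclusion that relies on the constant $v$. The only blemish is a sign inherited from the paper's proof of Theorem~\ref{lem:pole-placement-parameter}: for the closed loop $A+B\tilde K+Bv\tilde k$, Ackermann's formula requires $\tilde k(\theta)=-\begin{pmatrix}0&\cdots&0&1\end{pmatrix}T(\theta)^{-1}p_\theta\bigl(A(\theta)+B(\theta)\tilde K(\theta)\bigr)$, and this changes nothing else in your argument.
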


\section{Conclusions}
\label{sec:conclusions}

This paper considers feedback methods for one-parameter families of linear systems, where the domain of the input operator is finite-dimensional. The parameter space is assumed to be compact subset in the complex plane. In Theorem~\ref{thm:stab} we show that feedback operators with finite-dimensional range are not appropriate to stabilize such an unstable family of linear systems. To overcome this limitation, we consider multiplication feedback operators for families of linear systems. In particular, we investigate families of linear systems defined over the space of continuous functions.  In this context, in Lemma~\ref{lem:Heymann-parameter} we prove a parameter-version of Heymann's Lemma for continuous families with constant Hermite indices. Based on this, in Theorem~\ref{thm:feedback-exp-stab} we show that pointwise reachable continuous families of linear systems can be exponentially stabilized by a multiplication feedback operator if the family has constant Hermite indices.
Moreover, it is shown Theorem~\ref{thm:feedback-hermite} that a pointwise reachable family with constant Hermite indices can be turned into an uniformly ensemble reachable family by a multiplication feedback operator if the complement of the parameter space in connected. In the case of nonconstant Hermite indices a parameterized version of Heymann's lemma has been obtained in the case that the parameter set is an interval or a circle. Using the tools presented in that section generalizations to more complicated case should be possible, e.g. for parameter sets of covering dimension $1$.

\section{Appendix}
\label{sec:appendix}

The next result is a mild refinement of \cite[Corollary~4]{JDE_ensembles_2021} in the sense that it puts weaker assumptions on the properties of the parameter space $\p$. We note that this is partially contained in \cite{Jerome_MCRF_2023}. In general, ensemble reachability as defined in Definition~\ref{def:reachbility}\,(iii) is an approximation property in the space $X_n(\p)$. For the space of continuous functions, approximation of continuous functions on compact subsets $\C$ is a classical topic. It is well-known in the theory of complex approximation, that we have to impose  topological conditions on the parameter space $\p$, in particular that it is has empty interior and its complement $\C \setminus \p$ is connected, cf. \cite[Ch.~2, \S~3,B]{gaier1987}.

	\medskip
	
	\begin{proposition}\label{prop:1}
		Let $\mathbf{P}$ be nonempty, compact, and  with empty interior such that $\C\setminus \p$ is connected. Then, a pair $(A,b)\in C_{n,n}(\p) \times C_{n}(\p)$ is uniformly ensemble reachable if the following conditions are satisfied:
		\begin{enumerate}
			\item[(a)] $( A ( \theta ) , b ( \theta ) )$ is reachable for all $\theta \in \mathbf{ P }$.
			\item[(b)] For all distinct parameters $\theta, \theta' \in \mathbf{ P }$,  the spectra  $\sigma \big( A(\theta)\big)$ and $\sigma \big( A(\theta') \big)$ are disjoint.
			\item[(c)] For each $\theta \in \mathbf{ P }$, the eigenvalues of $A(\theta )$ are simple.
		\end{enumerate}
	\end{proposition}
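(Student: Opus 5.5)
By the Kalman-type characterization \eqref{eq:Kalman-ensemblereach} (Triggiani's result), I will show that the span of the functions $\theta\mapsto A(\theta)^k b(\theta)$, $k\ge 0$, is dense in $C_n(\p)$. The strategy is to diagonalize $A$ continuously, which turns the problem into $n$ scalar problems. Each scalar problem is then settled by Mergelyan's theorem, which applies because $\p$ has empty interior and connected complement.

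\emph{Step 1: continuous diagonalization.} By (b) and (c), the $n$ eigenvalues of $A(\theta)$ are distinct for each $\theta$, and the sets $\sigma(A(\theta))$ are pairwise disjoint as $\theta$ varies. I plan to invoke the folklore result announced in the introduction and proved in the appendix. It requires continuous, injective eigenvalue functions $\lambda_1,\dots,\lambda_n\colon\p\to\C$ with pairwise disjoint images, and it yields $T\in C_{n,n}(\p)$, invertible at every point, with $A(\theta)=T(\theta)\operatorname{diag}(\lambda_1(\theta),\dots,\lambda_n(\theta))T(\theta)^{-1}$.

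The main obstacle is producing these functions globally: simple eigenvalues can be continued only locally, and on a non-simply-connected $\p$ the continuation could permute them. I would first try to define them via the Riesz projections, which are continuous near each point. If global labelling fails, I would work with the spectral projections instead and localize the approximation argument of Step 2 accordingly. Injectivity of each $\lambda_i$ and disjointness of their images follow directly from (b), and (b) together with (c) gives that $\Lambda:=\bigcup_{\theta}\sigma(A(\theta))$ is the disjoint union of the homeomorphic images $\lambda_i(\p)$.

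\emph{Step 2: reduction to scalar approximation.} Set $w:=T^{-1}b$. Reachability (a) is invariant under the similarity, so $(\operatorname{diag}(\lambda),w(\theta))$ is reachable for each $\theta$, which forces $w_i(\theta)\neq 0$ for all $i$ and $\theta$. Since multiplication by $T$ is an isomorphism of $C_n(\p)$, it suffices to approximate an arbitrary $g\in C_n(\p)$ by $\sum_k c_k\operatorname{diag}(\lambda^k)w$. Equivalently, I need a single polynomial $p$ with $|p(\lambda_i(\theta))-g_i(\theta)/w_i(\theta)|<\varepsilon$ for all $i$ and $\theta$. Define $h$ on $\Lambda$ by $h(\lambda_i(\theta)):=g_i(\theta)/w_i(\theta)$. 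This is well defined and continuous because the $\lambda_i(\p)$ are disjoint compacta and each $\lambda_i$ is a homeomorphism onto its image.

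\emph{Step 3: Mergelyan.} It remains to see that $\Lambda$ is compact with empty interior and connected complement. By invariance of domain, each $\lambda_i(\p)$ is homeomorphic to $\p$, so it has empty interior. By Alexander/Borsuk duality, a compact set has connected complement in $\C$ exactly when its first Čech cohomology vanishes, which is a topological invariant; hence $\C\setminus\lambda_i(\p)$ is connected. A finite disjoint union of such compacta still has connected complement. Mergelyan's theorem then supplies the polynomial $p$, and density follows.
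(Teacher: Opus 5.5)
Your argument is correct in outline. It follows the paper through Steps~1--2 and then takes a different route for the approximation. Steps~1--2 match Lemma~\ref{lem:diagonal} up to cosmetic differences: the paper rescales so that $T^{-1}b=(1,\dots,1)^\top$ rather than dividing by $w_i$, and it reduces to discrete time rather than using \eqref{eq:Kalman-ensemblereach} directly.

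The paper applies Mergelyan's theorem separately on each $\lambda_i(\p)$ to get $p_i$ with $p_i\circ\lambda_i\approx f_i$. It then uses Runge's theorem to get polynomials $q_i$ approximating the indicator of $\lambda_i(\p)$ on $\Lambda$, and glues via $p=\sum_i p_iq_i$. You glue the data first: $h$ is continuous because the $\lambda_i(\p)$ are disjoint compacta, hence clopen in $\Lambda$. You then apply Mergelyan once on $\Lambda$ (in effect Lavrentiev's theorem, since $\Lambda$ has empty interior). The price is that you need $\C\setminus\Lambda$ connected, which you justify through topological invariance of $\check H^1$ and its additivity over disjoint unions. The paper's Runge step needs exactly this fact too, since polynomial approximation of the $h_i$ on $\Lambda$ requires a connected complement. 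The paper, however, only checks connectedness of each $\C\setminus\lambda_i(\p)$. So your version is shorter and puts the topological input where it is actually used. The paper's version separates fitting each component from separating the components, which is more hands-on if one wants to compute inputs.

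The one point you leave open is Step~1, and it needs an actual argument rather than a fallback. Your concern is real in general. On the unit circle, $A(\theta)=\begin{pmatrix}0&\theta\\1&0\end{pmatrix}$ with $b=e_1$ satisfies (a)--(c), yet the eigenvalues $\pm\sqrt{\theta}$ admit no continuous global labelling. What rules this out is the hypothesis that $\C\setminus\p$ is connected, which is therefore used twice. The argument runs as follows.
\begin{itemize}
\item Extend the coefficients of $\det(zI-A(\theta))$ continuously to $\C$ (Tietze). The discriminant is then nonzero on some open $W\supset\p$.
\item Because $\C\setminus\p$ is connected, for small $\varepsilon$ the closed $\varepsilon$-neighbourhood of $\p$, with its holes filled in, lies in $W$.
\item The finitely many components of its interior that meet $\p$ are pairwise disjoint simply connected domains.
\item Over each such domain the root set is an $n$-sheeted covering of a simply connected domain, hence trivial. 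Its sheets, restricted to $\p$, are the $\lambda_i$ you need.
\end{itemize}
The paper's Lemma~\ref{lem:diagonal} asserts these global $\lambda_i$ by citing perturbation theory, which for a complex parameter only gives them locally.

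Your fallback via spectral projections would not bypass this. You can indeed define $h$ on $\Lambda$ without labels. But your proof that $\C\setminus\Lambda$ is connected uses the decomposition $\Lambda=\bigsqcup_i\lambda_i(\p)$ with each piece homeomorphic to $\p$.
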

	
	\medskip

	The next lemma will be useful in the proof and may also be of independent interest.
	
	\begin{lemma}\label{lem:diagonal}
	Under the assumptions of Proposition~\ref{prop:1}, there is a continuously invertible  $T \in C_{n,n}(\p)$ such  that for every $\theta \in \p$ it holds
			\begin{align*}
		T(\theta)^{-1}A(\theta)T(\theta)=\begin{pmatrix}
			\lambda_1(\theta) & & \\
			& \ddots &\\
			& & \lambda_n(\theta)
		\end{pmatrix},\quad
		T(\theta)^{-1}b(\theta)= \begin{pmatrix}
			1\\
			\vdots\\
			1
		\end{pmatrix},
	\end{align*}
	where $\lambda_1,\dots,\lambda_n : \mathbf{P} \to \mathbb{C}$ are continuous and injective eigenvalue functions.
	\end{lemma}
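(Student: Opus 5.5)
The plan is to build $T$ column by column from continuous eigenvectors, then rescale by the coordinates of $b$.

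\textbf{Continuous eigenvalue functions.} For each $\theta_0$ the $n$ eigenvalues of $A(\theta_0)$ are simple by (c). By continuity of roots of the characteristic polynomial (Rouché), disjoint small discs around them each contain exactly one eigenvalue of $A(\theta)$ for $\theta$ near $\theta_0$. This gives local continuous eigenvalue branches. On overlaps the branches agree up to a relabelling, and such a relabelling is locally constant.

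\textbf{Global labelling.} A consistent global labelling is \emph{not} automatic in general; on a circle the eigenvalues can be permuted by monodromy. I will use assumption (b) together with the topology of $\p$ to avoid this.
\begin{itemize}
\item \emph{Injectivity.} Any continuous local branch is injective, since $\lambda(\theta)=\lambda(\theta')$ with $\theta\ne\theta'$ would contradict (b).
\item \emph{Labelling via the spectral set.} Consider $\Lambda=\bigcup_\theta\sigma(A(\theta))$ together with the map $\Lambda\to\p$ that sends each eigenvalue to its parameter. This map is well defined by (b) and continuous by compactness (closed graph). It is an $n$-sheeted covering.
\item \emph{Triviality of the covering.} Since $\p$ has connected complement, $H^1(\p;\mathbb Z)=0$ (Alexander duality / Eilenberg's theorem: every nonvanishing continuous function on $\p$ has a continuous logarithm). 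I expect this to force the finite covering to be trivial, giving global continuous $\lambda_1,\dots,\lambda_n$.
\item \emph{Alternative route.} A cleaner option is to argue via the discriminant, in the same spirit as the folklore result promised in the introduction. The paper announces that Lemma~\ref{lem:diagonal} rests on a folklore continuous-diagonalization result assuming continuous injective eigenfunctions. So the main obstacle is exactly producing the global labelling, and I will handle it via the covering triviality argument above.
\end{itemize}

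\textbf{Eigenprojections and eigenvectors.} Given continuous $\lambda_k$, the Riesz projections
\[
P_k(\theta)=\prod_{l\neq k}\frac{A(\theta)-\lambda_l(\theta)I}{\lambda_k(\theta)-\lambda_l(\theta)}
\]
(Lagrange interpolation) are continuous, because the denominators never vanish by (c). Reachability (a) with simple eigenvalues implies that $w_k(\theta):=P_k(\theta)b(\theta)\neq 0$: otherwise $b$ lies in an $A$-invariant subspace missing the $k$-th eigenvector, contradicting the Hautus test. Hence $w_k(\theta)$ is a continuous, nowhere-vanishing eigenvector for $\lambda_k(\theta)$.

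\textbf{Definition of $T$.} Set $T(\theta)=(w_1(\theta),\dots,w_n(\theta))$. The columns are eigenvectors for distinct eigenvalues, so $T(\theta)$ is invertible for every $\theta$. Moreover $T^{-1}$ is continuous, since inversion is continuous. By construction $T^{-1}AT$ is diagonal with entries $\lambda_k$. Finally,
\[
b=\sum_k P_k b=\sum_k w_k=T(1,\dots,1)^\top,
\]
which gives $T^{-1}b=(1,\dots,1)^\top$ as required.
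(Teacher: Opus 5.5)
\textbf{The gap: the global labelling of the eigenvalues.} Everything after the labelling step is correct, and it follows the paper's route: eigenprojections applied to $b$ give the columns of $T$. Your version is cleaner in two ways. The Lagrange formula for $P_k$ avoids constructing contours around the sets $\lambda_k(\p)$. And $b=\sum_k P_kb$ shows that the paper's rescaling $S_2$ is actually the identity.

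The problem is that you leave the global labelling at ``I expect''. This is not a formality. It is the only place where connectedness of $\C\setminus\p$ is used, and without that hypothesis the lemma fails. On the unit circle, $A(\theta)=\begin{pmatrix}0&1\\ \theta&0\end{pmatrix}$ and $b=e_1$ satisfy (a)--(c), yet a continuous $T$ would make $(T^{-1}AT)_{11}$ a continuous square root of $\theta$.

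The justification you offer also does not suffice as stated. $H^1(\p;\mathbb{Z})=0$, i.e.\ Eilenberg's logarithm property, only controls abelian monodromy. It settles $n=2$ by taking a continuous $\sqrt{\operatorname{disc}}$, and, by successive abelian reductions, $n\le 4$, where $S_n$ is solvable. But $n$-sheeted coverings are classified by $S_n$-valued data. In general, vanishing of $H^1$ with all abelian coefficients is compatible with nontrivial finite coverings (e.g.\ the Poincar\'e homology sphere). So the planar structure of $\p$ has to enter somewhere.

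\textbf{One way to close it.} Extend $A$ entrywise by Tietze to $\tilde A$ on $\C$. The set $W$ where $\tilde A(\theta)$ has $n$ distinct eigenvalues is open and contains $\p$. Now use connectedness of $\C\setminus\p$ to find an open $V$ with $\p\subset V\subset W$ and $\hat{\C}\setminus V$ connected:
\begin{enumerate}
\item Let $Q\subset W$ be the union of the closed squares of a fine grid that meet $\p$.
\item Join each of the finitely many bounded components of $\C\setminus Q$ to the unbounded one by a path in $\C\setminus\p$.
\item Delete from $Q$ an open tube around these paths that is thinner than their distance to $\p$, and let $V$ be the interior of what remains.
\end{enumerate}
Since $\hat{\C}\setminus V$ is connected, each component $G$ of $V$ is simply connected. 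Hence the $n$-sheeted covering $\{(\theta,z):\theta\in G,\ \det(zI-\tilde A(\theta))=0\}\to G$ is trivial. Only finitely many components meet $\p$, and restricting the resulting sheets gives continuous $\lambda_1,\dots,\lambda_n$ on $\p$.

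The paper's proof does not settle this step either: it cites Kato, whose continuous-numbering theorem is for a real-interval parameter. So you were right to single it out; it just needs an actual proof.
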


\begin{proof}
	As all the eigenvalues of $A$ are simple, it follows from classical perturbation theory, cf.  \cite[Ch.~II~\S~5.2, Thm.~5.1]{Kato-1995}, that the eigenvalues depend continuously on $\theta$  and that there are continuous functions  $\lambda_1,\dots,\lambda_n : \mathbf{P} \to \mathbb{C}$  such that for each parameter $\theta \in\p$ we have
    $\sigma(A(\theta)) = \{ \lambda_1(\theta),\dots,\lambda_n(\theta)\}$. 
    
    Let $\lambda_i(\p)$ denote the image of $\lambda_i : \mathbf{P} \to \mathbb{C}$. As $\p$ is compact also $\lambda_i(\p)$ is compact.
    Assumptions~(b) and (c) imply 
    that $\lambda_i(\p)\cap\lambda_j(\p)=\emptyset$ for $i\neq j$.	 Together with assumption~(c), we conclude that the functions $\lambda_1,\dots,\lambda_n $ are injective.
	
	 For each $i \in \{1,...,n\}$, let $\Gamma_i$ be a contour enclosing $\lambda_i(\p)$ such that for all $j\neq i$ the images $\lambda_j(\p)$ are outside $\Gamma_i$. Define the associated Riesz projection
	\[
	P_i(\theta)
	=
	\frac{1}{2\pi i}
	\int_{\Gamma_i}
	(z I-A(\theta))^{-1}\, \di{z} .
	\]
	Then $P_i(\theta)$ depends continuously on $\theta$, has rank one, and projects onto the eigenspace corresponding to $\lambda_i(\theta)$, cf. \cite[Ch.~II~\S~5.1, Thm.~5.1]{Kato-1995}.
	
    For every $\theta\in \p$ we have $P_i(\theta)b(\theta)\neq 0$, $i=1,\ldots,n$, as otherwise the Kalman rank condition fails. More concretely, if, say, $P_1(\theta)b(\theta)= 0$, then using \cite[I.(5.21)]{Kato-1995} we have 
    for all $k$ that $P_1(\theta) A^k(\theta)b(\theta) = A^k(\theta)P_1(\theta) b(\theta) = 0$ and so the image of the Kalman matrix is contained in the lower dimensional subspace $\ker P_1(\theta)$, a contradiction.
	
	For $i=1,...,n$, we define
	\[
	v_i(\theta):= P_i(\theta)b(\theta) \neq 0.
	\]
	By construction, for every $i=1,...,n$ it holds that
	$
	A(\theta) v_i(\theta)=\lambda_i(\theta)v_i(\theta).
	$
Then, by setting
	\[
	S_1(\theta):=
	(v_1(\theta),\dots,v_n(\theta))
	\]
	we obtain a continuous diagonalization of $A(\theta)$. By the reachability assumtption it follows that every entry of $\tilde{b}(\theta) := S_1(\theta)^{-1}b(\theta)$ is nonzero.	
Defining the continuously invertible matrix $S_2(\theta) = \operatorname{diag}\bigl(\tilde b_1(\theta)^{-1},...,\tilde b_n(\theta)^{-1}  \bigr)$, a direct computation shows that the transformation $T(\theta):=S_1(\theta)S_2(\theta)$  yields the desired structure. This shows the assertion.
\end{proof}

	\begin{proof}[Proof of Proposition~\ref{prop:1}]
		Based on \cite[Cor.~3.1.2]{triggiani75}, it is shown in \cite[Thm.~1]{schonlein2016controllability} that the discrete-time and the continuous-time statements are equivalent.  Hence, it is sufficient to treat the discrete-time case.  Furthermore, using Lemma~\ref{lem:diagonal}, we may assume that
		\begin{align*}
			A(\theta)=\begin{pmatrix}
				\lambda_1(\theta) & & \\
				& \ddots &\\
				& & \lambda_n(\theta)
			\end{pmatrix},\quad
			b(\theta)= \begin{pmatrix}
				1\\
				\vdots\\
				1
			\end{pmatrix},
		\end{align*}
		where $\lambda_1,...,\lambda_n$ denote the continuous eigenvalue functions with pairwise disjoint images. It follows from \cite[Thm.~1]{schonlein2016controllability} that the pair $(A,b)$ is uniformly ensemble reachable, if  and only if  
	for every $f \in C_n( \p)$ and every $\varepsilon>0$ there is a polynomial $p \in \mathbb{C}[z]$ such that
		\begin{align}\label{eq:uer-polynomial}
		\max_{\theta \in\p} \| p(A(\theta ))b(\theta)	- f(\theta) \| < \e.
		\end{align}

		So, let $f \in C_n( \p)$ and $\eps>0$ be fixed. To prove the statement, we will construct a  polynomial $p \in \mathbb{C}[z]$ satisfying \eqref{eq:uer-polynomial}.  We make the ansatz
		\begin{equation}\label{eq:def-ansatz-polynomial}
			p(z) = \sum_{k=1}^n p_k(z)q_k(z),
		\end{equation}
		where the polynomials $p_1,...,p_n\in \C[z]$ and $q_1,...,q_n\in \C[z]$  are  constructed in two separate steps.
		
		In a first step, denoting $f= \begin{pmatrix}
		    f_1, \ldots, f_n
		\end{pmatrix}^\top$, we show that exist polynomials $p_1,...,p_n\in \C[z]$ such that
		\begin{equation}\label{eq:proof_prop_1}
			\max_{\theta \in \p}\| p_i(\lambda_i(\theta)) - f_i(\theta)\|< \tfrac{\eps}{3}  \quad \text{ for all } i =1,..,n.
		\end{equation}
		We note that, by conditions~(b) and~(c), the functions $\lambda_i \colon \p \to \lambda_i(\p)$ are homeomorphisms onto their respective image. Thus, condition~\eqref{eq:proof_prop_1} is equivalent to
		\begin{equation}
			\label{eq:fuer-mergelyan}
			\max_{z \in \lambda_i(\p)}\| p_i(z) - f_i(\lambda_i^{-1}(z))\|< \tfrac{\eps}{3}  \quad \text{ for all } i =1,..,n.
		\end{equation}
		To see that such polynomials exist, we will use Mergelyan's Theorem~\cite[Ch.~III, §~2, Sec.~A, Theorem~1]{gaier1987}. In order to apply it, we have to verify that sets $\lambda_1(\p),...,\lambda_n(\p)$ are compact with empty interior and that $\mathbb{C} \setminus \lambda_i(\p)$ is connected for all $i=1,...,n$. 	Obviously, the sets  $\lambda_1(\p),...,\lambda_1(\p)$ are compact with empty interior. Moreover, it follows from \cite[Corollary~2.2 and~2.5]{Born2022} that  the sets $\C\setminus \lambda_i(\p)$, $i=1,...,n$ are connected. Hence, we can apply Mergelyan's Theorem showing the existence of polynomials $p_1,\ldots,p_n\in \C[z]$ satisfying \eqref{eq:proof_prop_1}.

		Second we establish the existence of suitable polynomials $q_1,...,q_n\in \C[z]$ with the aim to approximate the indicator functions of $\lambda_i(\p)$. To this end, let
		$$\lambda(\p) = \bigcup_{i=1}^n \lambda_i(\p) $$
		and recall that the assumptions~(b) and~(c)  imply that the sets $\lambda_1(\p),..., \lambda_n(\p)$ are pairwise disjoint.
		To get the polynomials $q_1,...,q_n$, we will use Runge's Theorem~\cite[Ch.~III, §~1, Sec.~B, Theorem~2]{gaier1987}. Choose pairwise disjoint regions $U_i\subset \C$ that properly contain $\lambda_i(\p)$, $i=1,\ldots,n$, and set $U=\cup_{i=1}^n U_i$. Moreover, we choose holomorphic functions $h_1,...,h_n:U \to \mathbb{C}$ such that their restrictions to $\lambda(\p) $ satisfy
		\begin{align*}
			h_i|_{\lambda(\p) }\colon \lambda(\p) \to \C , \quad   h_i(z)=
			\begin{cases}
				1 & \text{ if } z \in \lambda_i (\p) \\
				0 & \text{ if } z \in \lambda(\p) \setminus \lambda_i(\p).
			\end{cases}
		\end{align*}
		By applying Runge's Theorem to $h_1,...,h_n$, there are polynomials $q_1,...,q_n$ such that
		\begin{align*}
			\sup_{z \in \lambda_i(\p)} | q_i(z) - h_i(z))| <  \frac{\varepsilon}{3 \, \sum_{j=1}^n \alpha_{i,j}},
		\end{align*}
		where
		$$\alpha_{i,j}:= \sup_{\theta \in \p}|p_i(\lambda_j(\theta))|.$$
		
		The final step is to see that the proposed polynomial $p \in \mathbb{C}[z]$ defined in \eqref{eq:def-ansatz-polynomial} satisfies \eqref{eq:uer-polynomial}. It holds that
		\begin{align*}
		\max_{\theta \in\p} \| p(A(\theta ))b(\theta)	- f(\theta) \|  \leq  \max_{\theta \in \p} \sup_{i=1,...,n}\left| \sum_{j=1}^n p_j(\lambda_i(\theta))q_j(a_i(\theta))-f_i(\theta)\right|.
		\end{align*}
		By construction, for each $i \in\{1,...,n\}$ and each $\theta \in\p$ it holds that
		\begin{multline*} \left|  \sum_{j=1}^n p_j(\lambda_i(\theta))q_j(\lambda_i(\theta))-f_i(\theta)\right| \\\leq
			| p_i(\lambda_i(\theta))q_i(\lambda_i(\theta))-f_i(\theta)|
			+ \left| \sum_{j=1,\, j\neq i}^n p_j(\lambda_i(\theta))q_j(\lambda_i(\theta))\right| \\
			\leq | p_i(\lambda_i(\theta))-f_i(\theta)| + |  p_i(\lambda_i(\theta)) | \cdot | q_i(\lambda_i(\theta)) - 1|   + \sum_{j=1,\, j\neq i}^n |p_j(\lambda_i(\theta))| \cdot |q_j(\lambda_i(\theta))| <\eps.
		\end{multline*}
		This shows the assertion.
	\end{proof}


\begin{thebibliography}{10}

\bibitem{agrachev-baryshnikov-sarychev2016}
A.~Agrachev, Y.~Baryshnikov, and A.~Sarychev.
\newblock Ensemble controllability by {Lie} algebraic methods.
\newblock {\em ESAIM: Control, Optimisation and Calculus of Variations},
  22(4):921--938, 2016.

\bibitem{agrachev2020control}
A.~Agrachev and A.~Sarychev.
\newblock Control in the spaces of ensembles of points.
\newblock {\em SIAM Journal on Control and Optimization}, 58(3):1579--1596,
  2020.

\bibitem{appell2004nonlinear}
J.~Appell, E.~De~Pascale, and A.~Vignoli.
\newblock {\em Nonlinear Spectral Theory}.
\newblock Walter de Gruyter, 2004.

\bibitem{Bar-Fer-Zab-2007-generic}
I.~Barag\~{a}na, V.~Fern\'{a}ndez, and I.~Zaballa.
\newblock Hermite indices and state feedback: generic case.
\newblock {\em Linear and Multilinear Algebra}, 55(2):113--120, 2007.

\bibitem{bcr2010}
K.~Beauchard, J.-M. Coron, and P.~Rouchon.
\newblock Controllability issues for continuous-spectrum systems and ensemble
  controllability of {B}loch equations.
\newblock {\em Communications in Mathematical Physics}, 296(2):525--557, 2010.

\bibitem{blondel1994simultaneous}
V.~Blondel.
\newblock {\em Simultaneous Stabilization of Linear Systems}, volume 191 of
  {\em Lecture Notes in Control and Information Sciences}.
\newblock Springer, Berlin, 1994.

\bibitem{Born2022}
M.~Born.
\newblock {\em On Lacunary Approximation of Mergelyan Type}.
\newblock doctoral thesis, Universit{\"a}t Trier, 2022.

\bibitem{bressan-LN-functional-analysis}
A.~Bressan.
\newblock {\em Lecture notes on Functional Analysis with Applications to Linear
  Partial Differential Equations}.
\newblock Number 143 in Graduate Texts in Mathematics. American Mathematical
  Society, Providence, RI, 2013.

\bibitem{chen2019mcss}
X.~{Chen}.
\newblock Structure theory for ensemble controllability, observability and
  duality.
\newblock {\em Math. Control Signals Syst.}, 31(1):1--40, 2019.

\bibitem{Xudong_SICON_2023}
X.~Chen.
\newblock Controllability issues of linear ensemble systems over
  multidimensional parameterization spaces.
\newblock {\em SIAM Journal on Control and Optimization}, 61(4):2425--2447,
  2023.

\bibitem{Chen-feedback-Banff}
X.~Chen.
\newblock Pole placement and feedback stabilization for discrete linear
  ensemble systems.
\newblock In M.~Belabbas, editor, {\em Geometry, Topology, and Control System
  Design: Proceedings of a Banff International Research Station Workshop},
  number~13 in AIMS on Applied Mathematics, pages 5--31. AIMS, 2025.

\bibitem{conway_functional_2nd}
J.~B. Conway.
\newblock {\em A Course in Functional Analysis}, volume~96 of {\em Graduate
  Texts in Mathematics}.
\newblock Springer-Verlag, New York, NY, second edition, 1990.

\bibitem{curtainzwart2020}
R.~F. Curtain and H.~Zwart.
\newblock {\em {Introduction to Infinite-Dimensional Systems Theory. A
  State-Space Approach}}, volume~71 of {\em Texts in Applied Mathematics}.
\newblock {Springer}, New York, 2nd edition, 2020.

\bibitem{Jerome_MCRF_2023}
B.~Danhane, J.~Loh{\'e}ac, and M.~Jungers.
\newblock Conditions for uniform ensemble output controllability, and
  obstruction to uniform ensemble controllability.
\newblock {\em Mathematical Control and Related Fields}, 14(3):1128--1175,
  2024.

\bibitem{JDE_ensembles_2021}
G.~Dirr and M.~Sch{\"o}nlein.
\newblock Uniform and ${L}^q$-ensemble reachability of parameter-dependent
  linear systems.
\newblock {\em Journal of Differential Equations}, 283:216--262, 2021.

\bibitem{banff_2023}
G.~Dirr and M.~Sch{\"o}nlein.
\newblock Ensemble controllability on various function spaces: Bounded and
  unbounded domains.
\newblock In M.~Belabbas, editor, {\em Geometry, Topology, and Control System
  Design: Proceedings of a Banff International Research Station Workshop},
  number~13 in AIMS on Applied Mathematics, pages 121--140. AIMS, 2025.

\bibitem{engelnagel}
K.-J. {Engel} and R.~{Nagel}.
\newblock {\em One-Parameter Semigroups for Linear Evolution Equations}.
\newblock Springer, Berlin, 2000.

\bibitem{fuhrmann2015mathematics}
P.~A. Fuhrmann and U.~Helmke.
\newblock {\em The Mathematics of Networks of Linear Systems}.
\newblock Springer International Publishing, 2015.

\bibitem{gaier1987}
D.~Gaier.
\newblock {\em Lectures on Complex Approximation}.
\newblock Birkh\"auser, Boston, MA, 1987.

\bibitem{nagelone}
G.~Greiner and R.~Nagel.
\newblock Spectral theory of semigroups on {B}anach spaces.
\newblock In R.~Nagel, editor, {\em One-Parameter Semigroups of Positive
  Operators}, volume 1184 of {\em Lecture Notes in Mathematics}, pages 60--97.
  Springer, 1986.

\bibitem{guth2023ensemble}
P.~A. Guth, K.~Kunisch, and S.~S. Rodrigues.
\newblock Ensemble feedback stabilization of linear systems.
\newblock {\em Applied Mathematics and Optimization}, 92:21, 2025.

\bibitem{hardt1996spectral}
V.~Hardt and E.~Wagenf{\"u}hrer.
\newblock Spectral properties of a multiplication operator.
\newblock {\em Mathematische Nachrichten}, 178(1):135--156, 1996.

\bibitem{hautussontag1986}
M.~{Hautus} and E.~D. {Sontag}.
\newblock New results on pole-shifting for parametrized families of systems.
\newblock {\em J. Pure Appl. Algebra}, 40:229--244, 1986.

\bibitem{helmke2014uniform}
U.~Helmke and M.~Sch{\"o}nlein.
\newblock Uniform ensemble controllability for one-parameter families of
  time-invariant linear systems.
\newblock {\em Systems \& Control Letters}, 71:69--77, 2014.

\bibitem{heymann1968comments}
M.~Heymann.
\newblock On pole assignment in multi-input controllable linear systems.
\newblock {\em IEEE Transactions on Automatic Control}, 13(6):748--749, 1968.

\bibitem{hinrichsen2026mathematicalII}
D.~Hinrichsen, A.~J. Pritchard, F.~Colonius, T.~Damm, A.~Ilchmann, B.~Jacob,
  and F.~R. Wirth.
\newblock {\em Mathematical Systems Theory II: Control, Observation,
  Realization, and Feedback}, volume~85 of {\em Texts in Applied Mathematics}.
\newblock Springer Nature, Cham, Switzerland, 2026.

\bibitem{hu1963homotopy}
S.-T. Hu.
\newblock {\em Homotopy Theory}, volume~8 of {\em Pure and Applied
  Mathematics}.
\newblock Academic Press, New York, NY, 1959.

\bibitem{kailath1980}
T.~Kailath.
\newblock {\em {Linear systems}}.
\newblock {Prentice-Hall, Inc., Englewood Cliffs Publ., N.J.}, 1980.

\bibitem{Kato-1995}
T.~Kato.
\newblock {\em Perturbation Theory for Linear Operators}.
\newblock Classics in Mathematics. Springer-Verlag, Berlin, 1995.

\bibitem{Kell75}
J.~L. Kelley.
\newblock {\em General Topology}.
\newblock Number~27 in Graduate Texts in Mathematics. Springer-Verlag, New
  York, NY, 1975.

\bibitem{LAZAR2022265}
M.~Lazar and J.~Lohéac.
\newblock Control of parameter dependent systems.
\newblock In E.~Tr\'{e}lat and E.~Zuazua, editors, {\em Numerical Control: Part
  A}, volume~23 of {\em Handbook of Numerical Analysis}, pages 265--306.
  Elsevier, 2022.

\bibitem{li2011}
J.-S. Li.
\newblock Ensemble control of finite-dimensional time-varying linear systems.
\newblock {\em IEEE Transactions on Automatic Control}, 56(2):345--357, 2011.

\bibitem{li2009ensemble}
J.-S. Li and N.~Khaneja.
\newblock Ensemble control of {B}loch equations.
\newblock {\em IEEE Transactions on Automatic Control}, 54(3):528--536, 2009.

\bibitem{li_tac_2016}
J.~S. Li and J.~Qi.
\newblock Ensemble control of time-invariant linear systems with linear
  parameter variation.
\newblock {\em IEEE Transactions on Automatic Control}, 61(10):2808--2820,
  2016.

\bibitem{li2020separating}
J.-S. Li, W.~Zhang, and L.~Tie.
\newblock On separating points for ensemble controllability.
\newblock {\em SIAM Journal on Control and Optimization}, 58(5):2740--2764,
  2020.

\bibitem{pritchard1981stability}
A.~J. Pritchard and J.~Zabczyk.
\newblock Stability and stabilizability of infinite-dimensional systems.
\newblock {\em {SIAM} Review}, 23(1):25--52, 1981.

\bibitem{ryan2014simultaneous}
E.~P. Ryan.
\newblock On simultaneous stabilization by feedback of finitely many
  oscillators.
\newblock {\em IEEE Transactions on Automatic Control}, 60(4):1110--1114, 2014.

\bibitem{schechter2002principles}
M.~Schechter.
\newblock {\em Principles of functional analysis}.
\newblock Number~36 in Graduate Texts in Mathematics. American Mathematical
  Society, Providence, RI, 2002.

\bibitem{laa_feedback}
M.~Sch{\"o}nlein.
\newblock Feedback equivalence and uniform ensemble reachability.
\newblock {\em Linear Algebra and its Applications}, 646:175–--194, 2021.

\bibitem{MCSS_Ensembles}
M.~Sch{\"o}nlein.
\newblock Polynomial methods to construct inputs for uniformly ensemble
  reachable linear systems.
\newblock {\em Mathematics of Control, Signals, and Systems}, 36(2):251--296,
  2024.

\bibitem{schonlein2016controllability}
M.~Sch{\"o}nlein and U.~Helmke.
\newblock Controllability of ensembles of linear dynamical systems.
\newblock {\em Mathematics and Computers in Simulation}, 125:3--14, 2016.

\bibitem{sontag_intro_families}
E.~D. Sontag.
\newblock An introduction to the stabilization problem for parametrized
  families of linear systems.
\newblock In {\em Linear algebra and its role in systems theory ({B}runswick,
  {M}aine, 1984)}, volume~47 of {\em Contemp. Math.}, pages 369--400. Amer.
  Math. Soc., Providence, RI, 1985.

\bibitem{sontag}
E.~D. {Sontag}.
\newblock {\em Mathematical Control Theory. Deterministic Finite Dimensional
  Systems}.
\newblock Springer, New York, NY, 2nd edition, 1998.

\bibitem{sontagwang1990}
E.~D. {Sontag} and Y.~{Wang}.
\newblock Pole shifting for families of linear systems depending on at most
  three parameters.
\newblock {\em Linear Algebra Appl.}, 137-138:3--38, 1990.

\bibitem{triggiani75}
R.~Triggiani.
\newblock Controllability and observability in {B}anach space with bounded
  operators.
\newblock {\em SIAM Journal on Control}, 13(2):462--491, 1975.

\bibitem{triggiani1975stabilizability}
R.~Triggiani.
\newblock On the stabilizability problem in {B}anach space.
\newblock {\em Journal of Mathematical Analysis and Applications},
  52(3):383--403, 1975.

\bibitem{triggiani1975pathological}
R.~Triggiani.
\newblock Pathological asymptotic behavior of control systems in {B}anach
  space.
\newblock {\em Journal of Mathematical Analysis and Applications},
  49(2):411--429, 1975.

\bibitem{Zeng_scl_2016}
S.~Zeng and F.~Allg{\"o}wer.
\newblock A moment-based approach to ensemble controllability of linear
  systems.
\newblock {\em Systems \& Control Letters}, 98:49--56, 2016.

\bibitem{shen2017discrete}
S.~Zeng, H.~{I}shii, and F.~{A}llg\"ower.
\newblock Sampled observability and state estimation of discrete ensembles.
\newblock {\em IEEE Trans. Autom. Contr.}, 62(5):2406--2418, 2017.

\bibitem{zeng2016ensemble}
S.~Zeng, S.~Waldherr, C.~Ebenbauer, and F.~Allg{\"o}wer.
\newblock Ensemble observability of linear systems.
\newblock {\em IEEE Transactions on Automatic Control}, 61(6):1452--1465, 2016.

\bibitem{zhang2018controllability}
W.~Zhang and J.-S. Li.
\newblock On controllability of time-varying linear population systems with
  parameters in unbounded sets.
\newblock {\em Systems \& Control Letters}, 118:94--100, 2018.

\bibitem{zuazua2014averaged}
E.~Zuazua.
\newblock Averaged control.
\newblock {\em Automatica}, 50(12):3077--3087, 2014.

\end{thebibliography}
\end{document}